 \newtheorem{theorem}{Theorem}[section]
 \newtheorem{corollary}[theorem]{Corollary}
 \newtheorem{proposition}[theorem]{Proposition}
 \newtheorem{remark}[theorem]{Remark}
 \newtheorem{ex}[theorem]{Example}
 \theoremstyle{definition}
 \newtheorem{definition}[theorem]{Definition}
  \newtheorem{notation}[theorem]{Notation}
 \theoremstyle{remark}
 \numberwithin{equation}{subsection}
\begin{document}

\title{Gluing Formulas for Volume Forms on Representation Varieties of Surfaces}

\author{Esma Dirican Erdal}

\address{Deptartment of Mathematics,
\.{I}zmir Institute of Technology,
 35430, \.{I}zmir, Turkey}
 
\email{esmadiricanerdal@gmail.com, esmadirican131@gmail.com}

\date{\today}

\maketitle

\begin{abstract}
Let $\Sigma_{g,n}$ be a compact oriented surface with genus $g\geq 2$ bordered by $n$ circles. Due to Witten, the twisted Reidemeister torsion coincides with a power of the Atiyah-Bott-Goldman-Narasimhan symplectic form on the space of representations of $\pi_1(\Sigma_{g,0})$ in any semi-simple Lie group. In the present paper, we first obtain a multiplicative gluing formula for the twisted Reidemeister torsion of $\Sigma_{g,0}$ in terms of torsions of $\Sigma_{2,2},$ $\Sigma_{2,1},$ and boundary circles $\mathbb{S}^1.$
Then, by using Heusener and Porti's results on $\Sigma_{g,n},$ we show that the symplectic volume form on the representation variety of $\Sigma_{g,0}$ can be expressed as a product of the holomorphic symplectic volume forms on the relative representation varieties of surfaces $\Sigma_{2,1}$ and $\Sigma_{2,2}.$ 

\keywords{Reidemeister torsion \and Representation variety \and Volume form}
\end{abstract}

\section{Introduction}
\label{intro}
Throughout this paper we denote the compact oriented connected surface of genus $g=2^t=2k\geq 4$ with boundary disjoint union of $n\geq 1$ circles by $\Sigma_{g,n}$ and the closed surface by $\Sigma_{g,0}.$ Let $G$ be the complex reductive group $SL_2(\mathbb{C}).$ It is well-known that there are different decompositions of the surface $\Sigma_{g,0}.$ For the sake of convenience, we decompose this surface into genus $k$ closed oriented Riemann surfaces $\Sigma_{k,0}.$ We restrict our attention to irreducible representations whose stabilizers coincide with the center of $G$. In \cite{DJJM}, such representations are called \emph{good representations}. For closed surfaces  $\Sigma_{g,0},$ we denote the set of conjugacy classes of good representations from the fundamental group of $\Sigma_{g,0}$ to $G$ by $\mathcal{R}^{g}(\pi_1(\Sigma_{g,0}),G).$ For surfaces with boundary $\Sigma_{g,n}$, we consider $\mathcal{R}(\pi_1(\Sigma_{g,n}),\partial(\Sigma_{g,n}),G)_{\rho_0},$ the relative set of conjugacy classes of representations. Let $\mathcal{R}^{g}(\pi_1(\Sigma_{g,n}),\partial(\Sigma_{g,n}),G)_{\rho_0}$ denote the corresponding open subset of good representations. 

In this paper, we consider the good representations $\rho:\pi_1(\Sigma_{g,n})\rightarrow G$ such that
 \begin{itemize}
 \item[($C_1$)] If $n=0$ and $\Sigma_{g,0}=\Sigma_{k,0}^1{\#}\Sigma_{k,0}^2,$ then the restriction $\rho_{|_{\pi_1(\Sigma_{k,1}^i)}}$ is good and $\partial$-regular for each $i=1,2$.
  \item[($C_2$)] If $n\neq 0,$ then $\rho$ is $\partial$-regular and
 \begin{itemize}
 \item for $n=1$ and $\Sigma_{g,1}=\Sigma_{k,1}{\#}\Sigma_{k,0},$ the restriction $\rho_{|_{\pi_1(\Sigma_{k,b})}}$ is good and $\partial$-regular for each $b=1,2.$ 
 \item for $n=2$ and $\Sigma_{g,2}=\Sigma_{k,1}^1{\#}\Sigma_{k,1}^2,$ the restriction $\rho_{|_{\pi_1(\Sigma_{k,b}^i)}}$ is good and 
$\partial$-regular for each $b=1,2$ and $i=1,2.$
 \end{itemize}
 \end{itemize}
 Here, the superscript in the notation $\Sigma_{k,n}^i$ denotes the $i$-th component in the decomposition.
 
 The Cartan-Killing form $\mathcal{B}$ induces two $\mathbb{C}$-valued differential forms on $\mathcal{R}(\pi_1(\Sigma_{g,0}),G);$ a holomorphic volume form $\Omega_{{\Sigma_{g,0}}}$ and the Atiyah-Bott- Goldman -Narasimhan symplectic form $\omega_{_{\Sigma_{g,0}}}.$ The form $\Omega_{{\Sigma_{g,0}}}$ can be given as the twisted Reidemeister torsion of $\Sigma_{g,0}.$ In this paper, we first prove the following theorem which enables us to compute the twisted Reidemeister torsion of $\Sigma_{g,0}$ in terms of torsions of $\Sigma_{2,2},$ $\Sigma_{2,1},$ and boundary circles $\mathbb{S}^1.$
 \begin{theorem}\label{thm:mainthm}
Let $[\rho] \in \mathcal{R}^{g}(\pi_1(\Sigma_{g,0}),G)$ and let $\rho$ satisfy condition $C_1.$
 For a given basis $\mathbf{h}^1_{\Sigma_{g,0}}$ and a fixed basis 
$\mathbf{h}^{j}_{\mathbb{S}^1}$ of $H^1(\Sigma_{g,0};\mathrm{Ad}_\rho)$ and $H^j(\mathbb{S}^1;{\mathrm{Ad}_{\rho_{|_{\mathbb{S}^1}}}}),$ $j=0,1,$ there exist respectively bases $\mathbf{h}^1_{\Sigma_{2,2}^i}$ and $\mathbf{h}^1_{\Sigma_{2,1}^{\mu}}$ of $H^1(\Sigma_{2,2}^i;\mathrm{Ad}_{\rho_{|_{\Sigma_{2,2}}}})$ and $H^1(\Sigma_{2,1}^{\mu};\mathrm{Ad}_{\rho_{|_{\Sigma_{2,1}}^{\mu}}})$ for each $i \in \{1,\ldots,k-2\}$ and  $\mu \in \{1,2\}$ such that the corrective term becomes one and the 
following formula is valid
\begin{eqnarray*}
 \mathbb{T}(\Sigma_{g,0};\rho,\{0,\mathbf{h}^{1}_{\Sigma_{g,0}},0\})&=& 
\prod_{i=1}^{k-2}
\mathbb{T}(\Sigma_{2,2}^i;{\rho_{|_{\Sigma_{2,2}^i}}},\{0,\mathbf{h}^{1}_{\Sigma_{2,2}^i},0\})\\
&& \times\;\prod_{\mu=1}^{2}
\mathbb{T}(\Sigma_{2,1}^{\mu};{\rho_{|_{\Sigma_{2,1}^{\mu}}}},\{0,\mathbf{h}^{1}_{\Sigma_{2,1}^{\mu}},0\})\\
&& \times\; {\mathbb{T}(\mathbb{S}^1;\rho_{|_{\mathbb{S}^1}},
\{\mathbf{h}^{0}_{\mathbb{S}^1},\mathbf{h}^{1}_{\mathbb{S}^1}\})}^{-k+1}.
\end{eqnarray*}
\end{theorem}
 
For surfaces with boundary, the holomorphic volume form $\Omega_{\Sigma_{g,n}}$ is defined on $\mathcal{R}^{g}(\pi_1(\Sigma_{g,n}),G)$ but the holomorphic symplectic volume form $\omega_{\Sigma_{g,n}}$ is defined on the relative representation variety $\mathcal{R}^{g}(\pi_1(\Sigma_{g,n}),\partial(\Sigma_{g,n}),G)_{\rho_0}$ 
(see, \cite{Poritnw,GLD,SLAW}). In \cite{Witten} Witten established a relation between the volume forms $\Omega_{{\Sigma_{g,0}}}$ and $\omega_{_{\Sigma_{g,0}}}.$ Heusener and Porti generalizes the Witten's result to surfaces with boundary \cite{Poritnw}. Considering these results together with Theorem \ref{thm:mainthm}, we obtain the following result which expresses the Atiyah-Bott-Goldman-Narasimhan symplectic form $\omega_{\Sigma_{g,0}}$ on $\mathcal{R}^g(\pi_1(\Sigma_{g,0}),G)$ as the product of holomorphic symplectic volume forms on $\mathcal{R}^g(\pi_1(\Sigma_{g,n}),\partial(\Sigma_{g,n}),G)_{\rho_0}$ and a volume form on $\mathcal{R}(\partial(\Sigma_{g,n}),G).$

\begin{corollary}\label{cor:closed}
 Let $[\rho] \in \mathcal{R}^{g}(\pi_1(\Sigma_{g,0}),G)$ and let $\rho$ satisfy condition $C_1.$ For a given basis $\mathbf{h}^1_{\Sigma_{g,0}}$ and a fixed basis $\mathbf{h}^{j}_{\mathbb{S}^1}$ of $H^1(\Sigma_{g,0};\mathrm{Ad}_\rho)$ and $H^j(\mathbb{S}^1;{\mathrm{Ad}_{\rho_{|_{\mathbb{S}^1}}}}),$ $j=0,1,$ there exist respectively bases $\mathbf{h}^1_{\Sigma_{2,2}^i}$ and $\mathbf{h}^1_{\Sigma_{2,1}^{\mu}}$ of $H^1(\Sigma_{2,2}^i;\mathrm{Ad}_{\rho_{|_{\Sigma_{2,2}}}})$ and $H^1(\Sigma_{2,1}^{\mu};\mathrm{Ad}_{\rho_{|_{\Sigma_{2,1}}^{\mu}}})$ for each $i \in \{1,\ldots,k-2\}$ and  $\mu \in \{1,2\}$ such that the 
following formula holds
\begin{eqnarray*}
 \left|\omega_{\Sigma_{g,0}}^{6k-3}(\wedge \mathbf{h}^1_{\Sigma_{g,0}})\right|&=& 
\mathcal{M}_k\; \prod_{i=1}^{k-2}
\left|\omega_{\Sigma_{2,2}^i}^5(\wedge \mathbf{h}^1_{\Sigma_{2,2}^i})\wedge 
\nu_1^* \wedge \nu_2^* \right| \; \prod_{\mu=1}^{2}
\left|\omega_{\Sigma_{2,1}^{\mu}}^4(\wedge \mathbf{h}^1_{\Sigma_{2,1}^{\mu}})\wedge \nu_1^* \right|.
\end{eqnarray*}
Here, $\mathcal{M}_k=(6k-3)!\; (4!)^{-2} \; (5!)^{2-k} \; {\frac{\nu(\wedge \mathbf{h}^{1}_{\mathbb{S}^1})^{-2k+2}}{<\wedge\mathbf{h}^{1}_{\mathbb{S}^1},\wedge\mathbf{h}^0_{\mathbb{S}^1}>^{-2k+2}}}\in \mathbb{C}.$
\end{corollary}

 
\section{The Representation Variety of Surfaces}
\label{sec:1}
For the surface $\Sigma_{g,n},$ we assume $g=2k\geq 4$ and $n\geq 0.$ By \cite{ALEV}, as $G$ is algebraic, the set of all representations from 
$\pi_1(\Sigma_{g,n})$ to $G$ 
$$R(\pi_1(\Sigma_{g,n}),G)=\mathrm{Hom}(\pi_1(\Sigma_{g,n}),G)$$
  is an affine algebraic set. 
\begin{definition}[\cite{DJJM}]
 A representation $\rho \in R(\pi_1(\Sigma_{g,n}),G)$ is irreducible if the image $\rho(\pi_1(\Sigma_{g,n}))$ is not contained in any proper parabolic subgroup of $G.$ 
 \end{definition}
 Consider the $G$-action on $R(\pi_1(\Sigma_{g,n}),G)$ by conjugation.Then
 \begin{itemize}
 \item[(i)]The stabilizer of $\rho \in R(\pi_1(\Sigma_{g,n}),G)$ is 
 $$\mathrm{Stab}_\rho=\{g\in G| \;g\rho(\gamma)=\rho(\gamma)g, \forall \gamma \in \pi_1(\Sigma_{g,n})\}.$$
 \item[(ii)] The orbit of $\rho \in R(\pi_1(\Sigma_{g,n}),G)$ is
   $$\mathcal{O}(\rho)=\{g\rho(\gamma) g^{-1}\;| \; g \in G, \gamma \in \pi_1(\Sigma_{g,n})\}.$$ 
 \end{itemize}
  
  \begin{proposition}[\cite{DJJM}]
 A representation  $\rho \in R(\pi_1(\Sigma_{g,n}),G)$ is irreducible if and only if the orbit $\mathcal{O}(\rho)$ is closed in  $R(\pi_1(\Sigma_{g,n}),G)$ and 
 $\mathrm{Stab}_\rho$ is finite.
 \end{proposition}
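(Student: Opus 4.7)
The plan is to prove both directions by exploiting the simple structure of proper parabolic subgroups in $G=SL_2(\mathbb{C})$ or $PSL_2(\mathbb{C})$: every such parabolic is a Borel subgroup, conjugate to the upper-triangular group $B=TU$ with diagonal torus $T$ and unipotent radical $U$. For the forward implication, assume $\rho$ is irreducible. Finiteness of $\mathrm{Stab}_\rho$ follows from a Schur-type argument: any $g\in\mathrm{Stab}_\rho$ commutes with the entire image $\rho(\pi_1(\Sigma_{g,n}))$, and the absence of a common invariant line forces $g$ to be central in $G$, so $\mathrm{Stab}_\rho\subseteq Z(G)$ is finite. For closedness of $\mathcal{O}(\rho)$ I would use the Hilbert--Mumford criterion: if $\mathcal{O}(\rho)$ fails to be closed, there is a nontrivial one-parameter subgroup $\lambda:\mathbb{G}_m\to G$ such that $\lim_{t\to 0}\lambda(t)\rho\lambda(t)^{-1}$ exists but is not conjugate to $\rho$. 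The existence of this limit means $\rho(\gamma)\in P(\lambda):=\{g\in G:\lim_{t\to 0}\lambda(t)g\lambda(t)^{-1}\text{ exists}\}$ for every $\gamma$, and $P(\lambda)$ is a proper parabolic, contradicting irreducibility.

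For the reverse implication I argue contrapositively: assume $\rho$ is reducible, so after a conjugation each $\rho(\gamma)=\begin{pmatrix}a_\gamma & b_\gamma\\ 0 & a_\gamma^{-1}\end{pmatrix}$ lies in the Borel $B$. Two subcases arise. If $\rho$ is semisimple, i.e., conjugate to a representation whose image lies in $T$, then $T\subseteq\mathrm{Stab}_\rho$ after that further conjugation, making $\mathrm{Stab}_\rho$ infinite. Otherwise some $b_\gamma\neq 0$, and conjugating by $\lambda(t)=\mathrm{diag}(t,t^{-1})$ sends each $b_\gamma$ to $t^{2}b_\gamma$. The limit $\rho_0(\gamma):=\lim_{t\to 0}\lambda(t)\rho(\gamma)\lambda(t)^{-1}=\mathrm{diag}(a_\gamma,a_\gamma^{-1})$ therefore exists and is the semisimplification of $\rho$. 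Since semisimplicity is a conjugation invariant and $\rho$ is not semisimple while $\rho_0$ is, they are not $G$-conjugate, so $\rho_0\in\overline{\mathcal{O}(\rho)}\setminus\mathcal{O}(\rho)$ and the orbit fails to be closed.

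The step most prone to slippage is the Hilbert--Mumford invocation, which rigorously converts the failure of closedness into a one-parameter degeneration; I would cite it as the standard GIT result for reductive group actions on affine varieties. Once granted, the identification of $P(\lambda)$ as a proper parabolic is immediate in rank one, and the remaining matrix computations in $SL_2(\mathbb{C})$ are routine.
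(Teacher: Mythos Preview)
Your argument is correct for the groups at hand. The paper itself does not supply a proof of this proposition; it is simply quoted from \cite{DJJM} as background, so there is nothing to compare against line by line. What you have written is a clean, self-contained proof valid for $G=SL_2(\mathbb{C})$ and $PSL_2(\mathbb{C})$, exploiting the fact that in rank one every proper parabolic is a Borel and every nontrivial one-parameter subgroup determines such a Borel via $P(\lambda)$.

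Two minor remarks. First, your Schur-type step implicitly uses that a noncentral element of $G$ has a nontrivial proper eigenspace decomposition (or a unique fixed line, in the unipotent case) which the commuting image must preserve; this is fine but worth stating once, especially in the $PSL_2(\mathbb{C})$ case where one should lift to $SL_2(\mathbb{C})$ to speak of eigenspaces. Second, the Hilbert--Mumford step you flag is indeed the only nontrivial input: the precise form you need is the Kempf--Ness/Birkes statement that for a reductive group acting on an affine variety, any nonclosed orbit admits a nontrivial one-parameter subgroup whose limit lies in the closure but outside the orbit. Since $R(\pi_1(\Sigma_{g,n}),G)\subset G^N$ is affine and $G$ is reductive, this applies. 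With those two points made explicit, the proof is complete.
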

 
  \begin{definition}
 A representation $\rho \in R(\pi_1(\Sigma_{g,n}),G)$ is good if it is irreducible and its stabilizer $\mathrm{Stab}_\rho$ is the center of the group $G.$
  \end{definition}
  

Denote the set of irreducible $G$-representations of the fundamental group $\pi_1(\Sigma_{g,n})$ by 
$R^i(\pi_1(\Sigma_{g,n}),G).$ It is a Zariski open subset of 
$R(\pi_1(\Sigma_{g,n}),G).$ Note that each equivalence class in 
$$R(\pi_1(\Sigma_{g,n}),G)// G$$ contains a unique closed orbit and the orbit of every irreducible representation is closed. Thus, the categorical quotient of 
$R(\pi_1(\Sigma_{g,n}),G)$ restricted to $R^i(\pi_1(\Sigma_{g,n}),G)$ coincides with the set theoretic quotient. We use the notation $\mathcal{R}^{i}(\pi_1(\Sigma_{g,n}),G)$ for the quotient $R^i(\pi_1(\Sigma_{g,n}),G)/ G.$
Since $\pi_1(\Sigma_{g,n})$ is a free group for $n\geq 1$ and it is a surface group for $n=0$, $\mathcal{R}^{i}(\pi_1(\Sigma_{g,n}),G)$ is a manifold.

Let $R^g(\pi_1(\Sigma_{g,n}),G)$ denote the set of all good representations. By \cite{DJJM}, $R^g(\pi_1(\Sigma_{g,n}),G)$ is also a Zariski open subset of $R^i(\pi_1(\Sigma_{g,n}),G),$ and the action of $G$ on $R^g(\pi_1(\Sigma_{g,n}),G)$ is proper. Consider the following quotient
$$\mathcal{R}^{g}(\pi_1(\Sigma_{g,n}),G)=R^g(\pi_1(\Sigma_{g,n}),G)/G .$$
From the above discussion, it becomes an open subset of $\mathcal{R}^{i}(\pi_1(\Sigma_{g,n}),G),$ and it is also a smooth manifold.

The variety of characters is an affine algebraic set defined by its ring of polynomial functions, as the ring of functions on $R(\pi_1(\Sigma_{g,n}),G)$ invariant by conjugation. More precisely, the variety of characters is 
$$X(\pi_1(\Sigma_{g,n}),G) = R(\pi_1(\Sigma_{g,n}),G)//G.$$
The projection $R(\pi_1(\Sigma_{g,n}),G) \rightarrow X(\pi_1(\Sigma_{g,n}),G)$ factors through a surjective map 
$$\mathcal{R}(\pi_1(\Sigma_{g,n}),G) \rightarrow X(\pi_1(\Sigma_{g,n}),G)$$
and thus, for good representations we have :
\begin{proposition}\label{prp1}
The natural map restricts to an injection 
$\mathcal{R}^g(\pi_1(\Sigma_{g,n}),G) \hookrightarrow X(\pi_1(\Sigma_{g,n}),G)$
whose image is a Zariski open subset and a smooth complex manifold.
\end{proposition}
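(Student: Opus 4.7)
First, for injectivity, I would invoke the standard GIT description of $X(\pi_1(\Sigma_{g,n}),G)=R(\pi_1(\Sigma_{g,n}),G)//G$: since $G$ is reductive, each fibre of the quotient map contains a unique closed $G$-orbit, and two representations lie over the same character if and only if the closures of their orbits intersect. By definition a good representation is irreducible, and the proposition cited above gives that every irreducible representation has closed orbit. Hence if $\rho_1,\rho_2\in R^g(\pi_1(\Sigma_{g,n}),G)$ determine the same element of $X(\pi_1(\Sigma_{g,n}),G)$, the closed orbits $\mathcal{O}(\rho_1)$ and $\mathcal{O}(\rho_2)$ must coincide, so $\rho_1$ and $\rho_2$ represent the same class in $\mathcal{R}^g(\pi_1(\Sigma_{g,n}),G)$.

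Next, for Zariski openness of the image, I would use that $R^g$ is a $G$-invariant Zariski open subset of $R$, so its complement $R\setminus R^g$ is $G$-invariant and Zariski closed. Because the categorical quotient by a reductive group sends closed $G$-invariant subsets of $R$ to Zariski closed subsets of $X$, the image of $R\setminus R^g$ is closed; its complement in $X$, which coincides with the image of $\mathcal{R}^g$ under the injection established above, is therefore Zariski open.

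For the smooth complex manifold structure, the essential ingredients are the freeness and properness of the residual group action together with the smoothness of $R^g$ at good points. The stabiliser of any good representation equals the centre of $G$, which acts trivially by conjugation, so the induced action of the adjoint group on $R^g$ is free; the text already recorded that the $G$-action on $R^g$ is proper. The tangent space identification $T_\rho R\cong Z^1(\pi_1(\Sigma_{g,n}),\mathrm{Ad}_\rho)$ together with the vanishing of obstructions in $H^2(\pi_1(\Sigma_{g,n}),\mathrm{Ad}_\rho)$---automatic for $n\geq 1$ since $\pi_1$ is free, and obtained from Poincar\'e duality combined with $H^0=0$ for irreducible representations when $n=0$---shows that $R^g$ is smooth. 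A holomorphic slice theorem then produces a complex manifold structure on $R^g/G=\mathcal{R}^g$, which transfers through the injection to its Zariski open image in $X(\pi_1(\Sigma_{g,n}),G)$.

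The main obstacle will be this third step: injectivity and openness both reduce cleanly to general GIT facts about reductive group actions, but producing a genuine complex manifold structure requires combining local deformation theory (the correct dimension of $Z^1$ and the vanishing of second cohomology obstructions) with a holomorphic version of the slice theorem for proper free actions of complex Lie groups, rather than the standard real analogue. Once this is in place, identifying the transferred complex structure with the one induced from the ambient affine variety $X(\pi_1(\Sigma_{g,n}),G)$ is essentially automatic from algebraicity of the quotient map and the fact that $\mathcal{R}^g$ maps bijectively onto an open subvariety.
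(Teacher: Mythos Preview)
The paper does not actually prove this proposition: immediately after the statement it simply refers the reader to \cite[Section~1]{DJJM}, \cite[Proposition~3.8]{PEN}, and \cite{GLD}. Your outline is in fact a faithful sketch of the arguments found in those references, so in that sense you have supplied more than the paper itself does, and your overall strategy (closed orbits for injectivity, GIT for openness, vanishing of $H^2$ plus a free proper action for smoothness) is the standard and correct one.

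One point in your openness argument deserves tightening. You assert that the complement in $X$ of the image of $R\setminus R^g$ \emph{coincides} with the image of $\mathcal{R}^g$. This requires knowing that no good representation and no non-good representation can lie in the same fibre of $R\to X$, i.e.\ that $R^g$ is \emph{saturated} with respect to the quotient map. This does not follow from the injectivity you proved on $\mathcal{R}^g$ alone. The missing observation is that the fibre over the image of a good $\rho$ is exactly the single orbit $\mathcal{O}(\rho)$: any $\rho'$ in that fibre has $\overline{\mathcal{O}(\rho')}\supseteq\mathcal{O}(\rho)$ (the unique closed orbit), and comparing orbit dimensions (the stabiliser of $\rho$ is the centre, hence minimal) forces $\mathcal{O}(\rho')=\mathcal{O}(\rho)$, so $\rho'$ is itself good. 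Once $R^g$ is saturated, its image is genuinely the complement of the image of $R\setminus R^g$, and your closedness argument goes through. Alternatively, one can bypass this by invoking directly that the GIT quotient of a saturated $G$-invariant open subset is open in $X$, which is the formulation used in Newstead.
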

For the proof of Proposition \ref{prp1}, we refer to 
\cite[Section 1]{DJJM}, \cite[Proposition 3.8]{PEN} and \cite{GLD}.

Let $\partial(\Sigma_{g,n})= \mathbb{S}^1_1 \sqcup \cdots \sqcup  \mathbb{S}^1_n$
denote the decomposition in connected components which are circles. We let
 $\partial_i$ denote an element of the fundamental group of the corresponding  circle $\mathbb{S}^1_i$. 

\begin{definition}\cite[Section 4.3]{Kap} Let $\rho_0 \in R(\pi_1(\Sigma_{g,n}),G).$ Then the relative variety of representations $\mathcal{R}(\pi_1(\Sigma_{g,n}),\partial(\Sigma_{g,n}),G)_{\rho_0}$ is given as 
$$
\{[\rho] \in \mathcal{R}(\pi_1(\Sigma_{g,n}),G) | \rho(\partial_i) \in \mathcal{O}(\rho_0(\partial_i)), i= 1,\ldots,n\}.$$ 
 Here, $\mathcal{O}(\rho_0(\partial_i))$ denotes the conjugacy class of 
 $\rho_0(\partial_i).$
 We also denote
$$\mathcal{R}^g(\pi_1(\Sigma_{g,n}), \partial(\Sigma_{g,n}), G)_{\rho_0} = \mathcal{R}(\pi_1(\Sigma_{g,n}), \partial(\Sigma_{g,n}), G)_{\rho_0} \cap \mathcal{R}^{g}(\pi_1(\Sigma_{g,n}), G).$$
\end{definition}

\begin{definition}\cite[Section 3.5]{RS}
Let $g$ be an element of $G.$ If centralizer of $g$ has minimal dimension among centralizers of elements of $G,$ then $g$ is called regular. Equivalently, its conjugacy class $\mathcal{O}(g)$ has maximal dimension.
\end{definition}

\begin{definition}\cite[Section 2.2]{Poritnw}\label{def:bndry1} Let $\rho \in R(\pi_1(\Sigma_{g,n}, G)$ be a representation. If the elements $\rho(\partial_1), \cdots, \rho(\partial_n)$ are regular, then 
$\rho$ is called $\partial$–regular.
\end{definition}

Given a representation $\rho \in R(\pi_1(\Sigma_{g,n}),G),$ $H^{\ast}(\Sigma_{g,n};\mathrm{Ad}_\rho)$ denotes the group cohomology of $\Sigma_{g,n}$ with coefficients in the Lie algebra $\mathfrak{g}$ of $G$ twisted by the homomorphism
$$\pi_1(\Sigma_{g,n})\stackrel{\rho}{\rightarrow} G \stackrel{\mathrm{Ad}}{\rightarrow}  \mathfrak{g},$$
where $\mathrm{Ad}$ is the adjoint representation. Note  that the Lie algebra $\mathfrak{g}$ turns into a $\pi_1(\Sigma_{g,n})$-module via $\mathrm{Ad}\circ \rho.$ If there is no ambiguity, we denote this module by $\mathfrak{g},$ and the coefficients in cohomology by $\mathrm{Ad}_\rho.$ 
\begin{theorem}[\cite{ASikora}]\label{as1}
Let $\Sigma_{g,n}$ be a compact oriented connected surface of genus $g\geq 2$ with $n\geq 0$ boundary components. If $\rho \in R^{g}(\pi_1(\Sigma_{g,n}),G)$ is a good representation, then there is a natural isomorphism
$$ T_{[\rho]}\mathcal{R}^{g}(\pi_1(\Sigma_{g,n}),G) \cong H^1(\Sigma_{g,n};\mathrm{Ad}_\rho).$$
In particular, 
$$\mathrm{dim}(H^1(\Sigma_{g,n};\mathrm{Ad}_\rho))=-\chi(\Sigma_{g,n})\;d.$$
Here, $d$ is the dimension of $G.$
\end{theorem}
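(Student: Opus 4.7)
The plan is to run the standard Weil--Goldman deformation calculation, with goodness supplying the smoothness and vanishing inputs needed to make it clean.

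First I would identify the Zariski tangent space $T_{\rho}R(\pi_{1}(\Sigma_{g,n}),G)$ with the space of $1$-cocycles $Z^{1}(\pi_{1}(\Sigma_{g,n});\mathfrak{g})$: given a smooth path $t\mapsto\rho_{t}$ with $\rho_{0}=\rho$, the map $u(\gamma)=\tfrac{d}{dt}|_{t=0}\rho_{t}(\gamma)\rho(\gamma)^{-1}$ lands in $\mathfrak{g}$ and satisfies the cocycle identity $u(\gamma\delta)=u(\gamma)+\mathrm{Ad}_{\rho(\gamma)}u(\delta)$; conversely, every cocycle integrates to an honest deformation provided $\rho$ is a smooth point of $R$, which goodness guarantees. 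Differentiating the conjugation curve $t\mapsto e^{tX}\rho\,e^{-tX}$ at $t=0$ then shows that the tangent to the $G$-orbit $\mathcal{O}(\rho)$ is the coboundary space $B^{1}(\pi_{1}(\Sigma_{g,n});\mathfrak{g})$. Since the stabilizer of a good $\rho$ is the discrete center of $G$ and the $G$-action on $R^{g}$ is proper, $\mathcal{R}^{g}$ is a smooth manifold at $[\rho]$, whence
\[
T_{[\rho]}\mathcal{R}^{g}(\pi_{1}(\Sigma_{g,n}),G)\;\cong\;Z^{1}/B^{1}\;=\;H^{1}(\pi_{1}(\Sigma_{g,n});\mathfrak{g}).
\]
Because $\Sigma_{g,n}$ is an Eilenberg--MacLane space $K(\pi,1)$ for $g\geq 2$, this group cohomology matches the twisted sheaf cohomology $H^{1}(\Sigma_{g,n};\mathrm{Ad}_{\rho})$, which gives the first part.

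For the dimension formula I would invoke the Euler characteristic of the twisted chain complex: since the local system $\mathrm{Ad}_{\rho}$ has rank $\dim G$,
\[
\dim H^{0}(\Sigma_{g,n};\mathrm{Ad}_{\rho})-\dim H^{1}(\Sigma_{g,n};\mathrm{Ad}_{\rho})+\dim H^{2}(\Sigma_{g,n};\mathrm{Ad}_{\rho})=\chi(\Sigma_{g,n})\dim G,
\]
so it suffices to kill $H^{0}$ and $H^{2}$. One has $H^{0}=\mathfrak{g}^{\rho(\pi_{1})}$, the Lie algebra of the centralizer of $\rho(\pi_{1})$; goodness forces this centralizer to be the (discrete) center of $G$, whose Lie algebra vanishes for $SL_{2}(\mathbb{C})$ and $PSL_{2}(\mathbb{C})$. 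For $H^{2}$, when $n\geq 1$ the surface deformation retracts onto a 1-complex and $H^{2}=0$ on dimensional grounds; when $n=0$, twisted Poincar\'e duality---valid because the Killing form makes $\mathfrak{g}$ self-dual---gives $H^{2}\cong H^{0}=0$, and the Euler characteristic identity forces $\dim H^{1}=-\chi(\Sigma_{g,n})\dim G$.

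The main technical obstacle is the smoothness of $R(\pi_{1}(\Sigma_{g,n}),G)$ at a good $\rho$: obstructions to integrating an infinitesimal deformation live in $H^{2}(\pi_{1};\mathfrak{g})$, and one must rely on the fact (already implicit in \cite{GLD, DJJM}) that the good locus sits inside the smooth locus of the representation variety. Without this one obtains only an inclusion $H^{1}\hookrightarrow T_{[\rho]}\mathcal{R}^{g}$ rather than an isomorphism. Everything else---the coboundary computation, the Euler characteristic identity, the $K(\pi,1)$ comparison---is formal once smoothness is in hand.
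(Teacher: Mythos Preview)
The paper does not supply its own proof of this theorem: it is quoted from Sikora \cite{ASikora} (and the companion vanishing $H^0=H^2=0$ is likewise quoted, as Theorem~\ref{gold1}, from \cite{Goldman,Poritnw}), so there is nothing to compare against directly.

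Your sketch is the standard Weil--Goldman deformation argument and is correct in outline. One small expository wrinkle: you flag smoothness of $R$ at $\rho$ as ``the main technical obstacle'' that one must import from \cite{GLD,DJJM}, but in fact you have already established it. The obstruction to integrating a cocycle lies in $H^2(\pi_1;\mathfrak{g})$, and you show this group vanishes (by dimension when $n\ge 1$, by Poincar\'e duality and $H^0=0$ when $n=0$); that is precisely what makes $\rho$ a smooth point, so no external input is needed. With that loop closed, your argument is self-contained and matches what one finds in the cited sources.
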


 \begin{theorem}[\cite{Goldman,Poritnw}]\label{gold1}
 Let $\rho \in R(\pi_1(\Sigma_{g,n}),G)$ be a good representation.
 If $n=0$ or $n\geq 1$ and $\rho$ is $\partial$-regular, then
 $$H^2(\Sigma_{g,n};{Ad_{\rho}})=H^0(\Sigma_{g,n};{Ad_{\rho}})=0.$$
\end{theorem}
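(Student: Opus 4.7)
The plan is to treat $H^0$ and $H^2$ separately. The vanishing of $H^0$ follows directly from the goodness hypothesis via the identification of invariants with the Lie algebra of the stabilizer. The vanishing of $H^2$ then follows either from the homotopy type when $n\geq 1$ or from Poincar\'e duality via the Killing form when $n=0$.

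First I would identify $H^0(\Sigma_{g,n};\mathrm{Ad}_\rho)$ with the $\pi_1$-invariants $\mathfrak{g}^{\pi_1(\Sigma_{g,n})}$ of $\mathfrak{g}$ under $\mathrm{Ad}\circ\rho$. A vector $X\in\mathfrak{g}$ is invariant if and only if $\exp(tX)$ commutes with $\rho(\gamma)$ for every $\gamma\in\pi_1(\Sigma_{g,n})$ and every $t\in\mathbb{C}$, that is, if and only if $X$ lies in the Lie algebra of $\mathrm{Stab}_\rho$. Since $\rho$ is good, $\mathrm{Stab}_\rho=Z(G)$, which is finite for $G=SL_2(\mathbb{C})$ (equal to $\{\pm I\}$) and trivial for $G=PSL_2(\mathbb{C})$. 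In either case $Z(G)$ has vanishing Lie algebra, so $H^0(\Sigma_{g,n};\mathrm{Ad}_\rho)=0$.

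Next, for $H^2$, I would distinguish two cases. If $n\geq 1$, then $\Sigma_{g,n}$ has non-empty boundary and deformation retracts onto a 1-dimensional CW complex, namely a wedge of circles realising the free fundamental group. Twisted cohomology of such a complex vanishes in degrees $\geq 2$, so $H^2(\Sigma_{g,n};\mathrm{Ad}_\rho)=0$ regardless of $\rho$. If $n=0$, then $\Sigma_{g,0}$ is a closed oriented surface, and Poincar\'e duality with twisted coefficients yields an isomorphism
$$H^2(\Sigma_{g,0};\mathrm{Ad}_\rho)\cong H^0(\Sigma_{g,0};\mathrm{Ad}_\rho^{\ast})^{\ast}.$$
The Cartan--Killing form $\mathcal{B}$ on $\mathfrak{g}$ is non-degenerate, since $G$ is semi-simple, and $\mathrm{Ad}$-invariant, so it provides a $\pi_1$-equivariant isomorphism $\mathfrak{g}\cong\mathfrak{g}^{\ast}$. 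Hence the dual local system $\mathrm{Ad}_\rho^{\ast}$ is isomorphic to $\mathrm{Ad}_\rho$, and the vanishing of $H^0$ already established forces $H^2(\Sigma_{g,0};\mathrm{Ad}_\rho)=0$.

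The main obstacle is genuinely small: it amounts to invoking Poincar\'e duality with non-trivial coefficients in the correct form and confirming that the Killing form really does give a $\pi_1$-equivariant self-duality of $\mathfrak{g}$. Once this identification is in place, the whole statement collapses to the already-handled computation of $H^0$, and no additional structural input is needed beyond the defining property of a good representation.
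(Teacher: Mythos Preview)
Your argument is correct and is the standard one. Note, however, that the paper does not supply its own proof of this theorem: it is stated with a citation to Goldman and to Heusener--Porti and then used as a black box. So there is no in-paper proof to compare against; what you have written is essentially the proof one finds in the cited sources, namely identifying $H^0$ with $\mathrm{Lie}(\mathrm{Stab}_\rho)$ and then handling $H^2$ by the $1$-dimensionality of the homotopy type (bounded case) or Poincar\'e duality together with the self-duality of $\mathfrak{g}$ via $\mathcal{B}$ (closed case).
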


\section{The Twisted Reidemeister Torsion}
\label{sec:2}
Reidemeister torsion was first introduced by Kurt Reidemeister, where he classified (up to PL equivalence) $3$-dimensional lens spaces \cite{Reidemeister}. Generalization of this invariant to other dimensions was given by Franz \cite{Franz} and the topological invariance for manifolds was shown by Kirby-Siebenmann \cite{RCLC}. The Reidemeister torsion is also an invariant of the basis of the homology of a manifold  \cite{Milnor}. Moreover, for closed oriented Riemann surfaces the Reidemeister torsion is equal to the analytic torsion \cite{Cheeger}.

Let us denote the universal covering of $\Sigma_{g,n}$ by 
$\widetilde{\Sigma_{g,n}}$ and the non-degenerate Killing form on $\mathfrak{g}$ by $\mathcal{B}$ which is defined by $\mathcal{B}(A,B) = 4 \;\mathrm{Trace}(AB).$ Let $\rho \in R(\pi_1(\Sigma_{g,n}),G)$ be a representation. Recall that we consider the action of $\pi_1(\Sigma_{g,n})$ on $\mathfrak{g}$ via the adjoint of $\rho.$ Consider a cell decomposition $K$ of $\Sigma_{g,n}$ and denote by $\widetilde{K}$ the lift of $K$ to $\widetilde{\Sigma_{g,n}}$ and let
 $$\mathbb{Z}[\pi_1(\Sigma_{g,n})]=\left\{\sum_{i=1}^p m_i\gamma_i\; ; m_i\in \mathbb{Z},\;
\gamma_i\in \pi_1(\Sigma_{g,n}),\; p\in \mathbb{N}\right\}$$
be the integral group ring. If $C_{\ast}(\widetilde{K}; \mathbb{Z})$ denotes the cellular chain complex on the universal covering, one defines 
$C^{\ast}(K;{\mathrm{Ad}_\rho})$ as the set of
$\mathbb{Z}[\pi_1(\Sigma_{g,n})]$-module homomorphism from
$C_{\ast}(\widetilde{K};\mathbb{Z})$ to $\mathfrak{g}$. More precisely,
 \begin{equation}\label{defchn1}
 C^{\ast}(K;\mathrm{Ad}_{\rho})=\mathrm{Hom}_{_{\mathbb{Z}[\pi_1(\Sigma_{g,n})]}
 }(C_{\ast}(\widetilde{K}; \mathbb{Z}),\mathfrak{g}).
 \end{equation}

Suppose that $\mathcal{A}=\{\mathfrak{a}_i\}_{i=1}^{\dim \mathfrak{g}}$ is a
$\mathcal{B}$-orthonormal $\mathbb{C}$-basis of $\mathfrak{g}.$
For each $p-$cell $e_j^p$ of $K$ we choose a lift $\widetilde{e_j^p}$ to the universal covering $\widetilde{K},$ then
$$\mathbf{c}^p=\{(\widetilde{e_j^p})^{\ast}\otimes\mathfrak{a}_k\}_{pk}$$
is a basis of $C^{p}(K;{\mathrm{Ad}_
\rho})$, called the \textit{geometric basis}. Here, 
$$(\widetilde{e_j^p})^{\ast}\otimes\mathfrak{a}_k:C_{\ast}(\widetilde{K};{\mathrm{Ad}_\rho})\rightarrow \mathfrak{g}$$ is the unique $\mathbb{Z}[\pi_1(\Sigma_{g,n})]$-homomorphism given by $ (\widetilde{e_j^p})^{\ast}\otimes\mathfrak{a}_k(e_i^p)=\delta_{ji}\mathfrak{a}_k.$

Consider the following cochain complex $C^{\ast}=C^{\ast}(K;\mathrm{Ad}_{\rho})$ 
\begin{equation}\label{chaincomplex}
C^{\ast}= \begin{array}{ccc}
 (0 \to C^{0}(K;{\mathrm{Ad}_
\rho}) {\longrightarrow}
C^{1}(K;{\mathrm{Ad}_
\rho}){\longrightarrow}
C^{2}(K;{\mathrm{Ad}_
\rho}){\rightarrow}0).
  \end{array}
\end{equation} For $p \in\{0,1,2\},$ let
$$B^p(C^{\ast})=\mathrm{Im}\{\partial_{p}:C^{p-1}(C^{\ast})\rightarrow C^{p}(C^{\ast})\},$$ 
$$Z^p(C^{\ast})=\mathrm{Ker}\{\partial_{p+1}:C^{p}(C^{\ast})\rightarrow
C^{p+1}(C^{\ast}) \},$$
 and 
 $$H^p(C^{\ast})=Z^p(C^{\ast})/B^p(C^{\ast})$$ be
$p$-th cohomology group of the cochain complex (\ref{chaincomplex}). 
Then there are
the following short exact sequences
\begin{equation}\label{Equation1}
0\longrightarrow Z^p(C^\ast) \stackrel{\imath}{\hookrightarrow} 
C^p(C^{\ast})
\stackrel{\partial_p}{\longrightarrow} B^{p+1}(C^\ast) \longrightarrow 0,
\end{equation}
\begin{equation}\label{Equation2}
0\longrightarrow B^p(C^\ast) \stackrel{\imath}{\hookrightarrow} 
Z^p(C^\ast)
\stackrel{\varphi_p}{\twoheadrightarrow} H^p(C^\ast) \longrightarrow 0.
\end{equation}
Here, "$\hookrightarrow$" and "$\twoheadrightarrow$" are the
inclusion and the natural projection, respectively. 

Assume that $s_p:B^{p+1}(C_{\ast})\rightarrow C^p(C^{\ast})$ and
$\ell_p:H^p(C^{\ast})\rightarrow Z^p(C^{\ast})$ are sections of
$\partial_p:C^p(C^{\ast}) \rightarrow B^{p+1}(C^{\ast})$ and
$\varphi_p:Z^p(C^{\ast})\rightarrow H^p(C^{\ast}),$
respectively. Then we obtain 
\begin{equation}\label{klur2}
C^p(C^{\ast})=s_{p}(B^{p+1}(C^{\ast}))\oplus\ell_p (H^p(C^{\ast}))\oplus
 B^p(C^{\ast}).
\end{equation}
If $\mathbf{h^p}$ and $\mathbf{b^p}$ are respectively bases of 
$H^p(C^{\ast})$ and $B^p(C^{\ast}),$ then, by equation~(\ref{klur2}), the following disjoint union $$s_p(\mathbf{b}^{p+1})\sqcup \ell_p(\mathbf{h}^p)\sqcup
\mathbf{b}^p$$ becomes a new basis for $C^p(C^{\ast}).$

\begin{definition}
The twisted Reidemeister torsion of $\Sigma_{g,n}$ is defined as the following alternating product
$$\mathbb{T}(\Sigma_{g,n};\rho,\{\mathbf{h}^p\}_{p=0}^{2})=\prod_{p=0}^2 \left[s_p(\mathbf{b}^{p+1})\sqcup \ell_p(\mathbf{h}^p)\sqcup
\mathbf{b}^p, \mathbf{c}^p\right]^{(-1)^{(p)}}\in \mathbb{C}^{\ast}\setminus \{\pm 1\}.$$
 Here, $\left[ \mathbf{e}, \mathbf{f}\right]$ is the determinant of
the transition matrix from basis $\mathbf{f}$ to $\mathbf{e}.$
\end{definition}

The twisted Reidemeister torsion does not depend on the cell-decomposition $K,$ the basis $\mathcal{A},$ the lifts $\widetilde{e}^p_j,$ and the conjugacy classes of $\rho.$ Following \cite{Milnor}, it is also independent of the bases $\mathbf{b}^p$ and the sections $s_p, \ell_p.$

\begin{notation} For an ordered basis $\mathbf{e}=\{e_1,\ldots, e_m\}$ of a vector space, denote 
$$\wedge \mathbf{e} = e_1 \wedge \cdots \wedge e_m.$$ 
Since $\wedge \mathbf{e} = [\mathbf{e}, \mathbf{f}](\wedge \mathbf{f}),$ the notation
$[\mathbf{e},\mathbf{f}] = \wedge \mathbf{e} / \wedge \mathbf{f}$
is often used in the literature (cf. \cite{Milnor2}).
\end{notation} 

The Mayer-Vietoris sequence is one of the useful tools to compute the twisted Reidemeister torsion. More precisely, assume that $X$ is a compact CW-complex with subcomplexes $X_1,$ $X_2 \subset X$ so that $X=X_1\cup X_2$ and $Y=X_1\cap X_2.$ Let $Y_1,\ldots,Y_k$ be the connected components of $Y.$ For $\nu= 1,2,$ consider the inclusions 
$$Y \overset{i_\nu} {\hookrightarrow} X_\nu \overset{j_\nu} {\hookrightarrow}X.$$  For $\mu \in \{1,\ldots,k\},$ let $\rho:\pi_1(X)\rightarrow G$ be a representation with the restrictions 
$\rho_{_{|_{X_{\nu}}}}:\pi_1(X_\nu)\rightarrow G,$ 
$\rho_{_{|_{Y_{\mu}}}}:\pi_1(Y_\mu)\rightarrow G.$ Then there is a Mayer-Vietoris long exact sequence in cohomology with twisted coefficients 
\begin{eqnarray}\label{es12es}
& &\mathcal{H}^{\ast}:\cdots\longrightarrow H^i(X;\mathrm{Ad}_\rho) \longrightarrow H^i(X_1;\mathrm{Ad}_{\rho_{_{|_{X_{1}}}}})\oplus H^i(X_2;\mathrm{Ad}_{\rho_{_{|_{X_{2}}}}}) \nonumber \\ 
&&\quad \quad\quad \quad\quad\quad\quad \quad\; \; \tikz\draw[->,rounded corners,nodes={asymmetrical rectangle}](5.10,0.4)--(5.10,0)--(1,0)--(1,-0.4)node[yshift=4.0ex,xshift=12.0ex] {}; \nonumber\\ 
&&\quad\quad\quad\quad\quad \underset {{\mu}}{\oplus} 
H^i(Y_{\mu};\mathrm{Ad}_{\rho_{_{|_{Y_{\mu}}}}}) 
\longrightarrow H^{i+1}(X;\mathrm{Ad}_\rho)\longrightarrow \cdots
 \end{eqnarray}
Choose a basis for each of these cohomology groups $\mathbf{h}_X^i$ for 
$H^i(X;\mathrm{Ad}_\rho),$ $\mathbf{h}^i_{X_1}$ for $H^i(X_1;\mathrm{Ad}_{\rho_{_{|_{X_{1}}}}}),$ $\mathbf{h}_{X_2}^i$ for $H^i(X_2;\mathrm{Ad}_{\rho_{_{|_{X_{2}}}}}),$ and $\mathbf{h}_{Y_{\mu}}^i$ for 
$H^i(Y_{\mu};\mathrm{Ad}_{\rho_{_{|_{Y_{\mu}}}}}).$ 
The long exact sequence (\ref{es12es}) can be viewed as a cocomplex. 
\begin{notation}
The torsion $\mathbb{T}(\mathcal{H}^{\ast},\{\mathbf{h}^{\ast \ast}\})$ is called the \textit{corrective term} and the same terminology is also used in \cite{BorgStefa}.
\end{notation}
 Following theorem gives a multiplicative gluing formula.
\begin{theorem}\label{prt1}
For a compact CW-complex $X =X_1\cup X_2$ with $Y=X_1\cap X_2,$ if $Y_1,\ldots,Y_k$ are the connected components of $Y$ and 
$\rho:\pi_1(X)\rightarrow G$ is a representation, then by choosing basis in cohomology the following formula holds 
\begin{eqnarray*}
 \mathbb{T}(X_1;\rho_{_{|_{X_{1}}}},\{\mathbf{h}_{X_1}^i\})\; 
 \mathbb{T}(X_2;\rho_{_{|_{X_{2}}}},\{\mathbf{h}_{X_2}^i\})&=&\mathbb{T}(X;\rho,\{\mathbf{h}_X^i\}) \; \prod_{\mu=1}^k\mathbb{T}(Y_{\mu}; \rho_{_{|_{Y_{\mu}}}},\{\mathbf{h}_{Y_{\mu}}^i\})\\
& & \times \; \mathbb{T}(\mathcal{H}^{\ast}, \{\mathbf{h}^{\ast \ast}\}).
\end{eqnarray*}
\end{theorem}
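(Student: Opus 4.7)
The plan is to deduce Theorem \ref{prt1} from the standard multiplicativity of Reidemeister torsion under short exact sequences of based cochain complexes (Milnor \cite{Milnor}), applied to the Mayer-Vietoris sequence at the cochain level. First, I would fix a cell-decomposition $K$ of $X$ for which $X_1$, $X_2$, and $Y$ are cellular subcomplexes; this is always achievable after suitable subdivision. Lifting to the universal cover $\widetilde{X}$ and dualising the cellular Mayer-Vietoris chain sequence produces a short exact sequence of $\mathbb{Z}[\pi_1(X)]$-cochain complexes
\begin{equation*}
0 \longrightarrow C^{\ast}(K;\mathrm{Ad}_\rho) \longrightarrow C^{\ast}(K_1;\mathrm{Ad}_{\rho|_{X_1}}) \oplus C^{\ast}(K_2;\mathrm{Ad}_{\rho|_{X_2}}) \longrightarrow \bigoplus_{\mu} C^{\ast}(K_{Y_\mu};\mathrm{Ad}_{\rho|_{Y_\mu}}) \longrightarrow 0,
\end{equation*}
whose associated long exact cohomology sequence is exactly $\mathcal{H}^{\ast}$ displayed in (\ref{es12es}).

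Next, I would equip each complex with its geometric basis as described in Section 3. Because $X_1$, $X_2$, and $Y$ are cellular subcomplexes of $X$, once consistent lifts of the cells of $K$ to $\widetilde{X}$ are fixed, the three geometric bases are compatible with the above short exact sequence: the transition from the basis of the middle cochain term to the concatenation of the bases of the outer two cochain terms has $\pm 1$ determinant. I then invoke Milnor's short-exact-sequence formula for torsion: for a based short exact sequence $0\to C'^{\ast}\to C^{\ast}\to C''^{\ast}\to 0$ with chosen cohomology bases assembled into the long exact sequence $\mathcal{H}$, one has $\mathbb{T}(C^{\ast})=\mathbb{T}(C'^{\ast})\,\mathbb{T}(C''^{\ast})\,\mathbb{T}(\mathcal{H})$. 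Multiplicativity of torsion under direct sums then factors the torsion of the middle term as $\mathbb{T}(X_1;\rho|_{X_1},\{\mathbf{h}_{X_1}^i\})\,\mathbb{T}(X_2;\rho|_{X_2},\{\mathbf{h}_{X_2}^i\})$ and that of the right-hand term as $\prod_{\mu}\mathbb{T}(Y_\mu;\rho|_{Y_\mu},\{\mathbf{h}_{Y_\mu}^i\})$. Rearranging produces the stated identity, with corrective term $\mathbb{T}(\mathcal{H}^{\ast},\{\mathbf{h}^{\ast\ast}\})$.

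The main obstacle is the bookkeeping of bases. Two items require care: first, verifying that the geometric bases of $X$, of $X_1\sqcup X_2$, and of $\sqcup_\mu Y_\mu$ are truly compatible with the short exact sequence of cochains up to a $\pm 1$ determinant, which relies on consistent choices of cell lifts in $\widetilde{X}$ and of orderings of cells on each subcomplex; second, identifying the corrective term that emerges from Milnor's formula with $\mathbb{T}(\mathcal{H}^{\ast},\{\mathbf{h}^{\ast \ast}\})$ in the sense of the Notation preceding the theorem, namely the torsion of the long exact cohomology sequence viewed as an acyclic cochain complex equipped with the chosen cohomology bases. Both are routine once the cellular setup is fixed, and the invariance properties of $\mathbb{T}$ recorded after its definition ensure that the final identity is independent of the auxiliary choices of cell-decomposition, lifts, and orthonormal basis $\mathcal{A}$.
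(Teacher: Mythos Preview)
Your outline is correct and is precisely the standard argument: the paper does not give its own proof of Theorem~\ref{prt1} but simply refers the reader to Milnor \cite{Milnor} and Porti \cite{Porti2}, where exactly this reduction to the multiplicativity of torsion in a based short exact sequence of cochain complexes is carried out. The points you flag as requiring care (compatibility of geometric bases up to sign, and identification of the corrective term with the torsion of the long exact sequence) are the only genuine content, and they are handled in those references just as you describe.
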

The proof can be found in \cite{Milnor} or in \cite{Porti2}. For further information and the detailed proof, the reader is also referred to \cite{Milnor2,Porti,Turaev,Witten} and the references therein.

\section{The Volume Forms on Representation Varieties of Surfaces}
\label{sec:3}
\subsection{The holomorphic volume form on $\mathcal{R}^g(\pi_1(\Sigma_{g,n}),G)$}
\label{sec:4}
It is obtained from Theorem \ref{as1} that the tangent space of $\mathcal{R}^{g}(\pi_1(\Sigma_{g,n}),G)$ at $[\rho]$ can be identified with $H^1(\Sigma_{g,n};\mathrm{Ad}_\rho).$ Hence, there is a natural holomorphic volume form on $H^1(\Sigma_{g,n};\mathrm{Ad}_\rho)$ :
$$\Omega_{_{\Sigma_{g,n}}}(\wedge \mathbf{h}^1_{\Sigma_{g,n}}) = \pm \mathbb{T}(\Sigma_{g,n}; \rho, 
\{0,\mathbf{h}^1_{\Sigma_{g,n}},0\}),$$
where $\mathbf{h}^1_{\Sigma_{g,n}}$ is a basis for $H^1(\Sigma_{g,n};\mathrm{Ad}_\rho).$ 


\subsection{The symplectic volume form on $\mathcal{R}^g(\pi_1(\Sigma_{g,0}),G)$}
\label{sec:5}

Let $\rho:\pi_1(\Sigma_{g,0})\rightarrow G$ be a representation and let $K$ be a cell decomposition of $\Sigma_{g,0}$. The twisted chains can be associated as
$$C_{\ast}(K;{\mathrm{Ad}_\rho})=\displaystyle
C_{\ast}(\widetilde{K};\mathbb{Z})\displaystyle\otimes \mathfrak{g}
/\sim ,$$
 where, $\sigma\otimes t \sim
\gamma\cdot\sigma\otimes\gamma\cdot t,\forall \gamma\in \pi_1(\Sigma_{g,0})$, the action of $\pi_1(\Sigma_{g,0})$ on $\widetilde{\Sigma_{g,0}} $ is the deck
transformation, and the action of $\pi_1(\Sigma_{g,0})$ on $\mathfrak{g}$ is
the adjoint action. Recall that the twisted cochains are given as 
$$C^*(K;{\mathrm{Ad}_
\rho})=\mathrm{Hom}_{\mathbb{Z}[\pi_1(\Sigma_{g,0})]}(C_*(\widetilde{K};\mathbb{Z}),\mathfrak{g}).$$

\begin{definition}
The Kronecker pairing $\left\langle \cdot ,\cdot \right\rangle :C^{i}\left( K;{\mathrm{Ad}_
\rho}\right) \times C_{i}\left( K;{\mathrm{Ad}_
\rho}\right)   \longrightarrow  \mathbb{C}$
is defined by
$\left<\theta,\sigma{\otimes}_{\phi}t\right>=\mathcal{B}\left(t,\theta\left(\sigma\right)\right),$
where $\mathcal{B}$ denotes the Cartan-Killing form.
\end{definition}

Clearly, the pairing can be extended to the following pairing
$$\left\langle \cdot ,\cdot \right\rangle   :  H^{i}\left( \Sigma_{g,0} ;{\mathrm{Ad}_
\rho}\right) \times H_{i}\left( \Sigma_{g,0} ;{\mathrm{Ad}_
\rho}\right)   \longrightarrow   \mathbb{C}.$$

\begin{definition}
The cup product $\smile_{\mathcal{B}}:C^i(K;{\mathrm{Ad}_
\rho})\times C^{j
}(K;{\mathrm{Ad}_
\rho})\to
C^{i+{j }}(\widetilde{\Sigma_{g,0}};\mathbb{C})$
is defined by 
 $$(\theta_i\smile_{\mathcal{B}}\theta_{j })(\sigma_{i+{j }})=
\mathcal{B}(\theta_i((\sigma_{i+{j }})_\mathrm{front}),\theta_{j
}((\sigma_{i+{j }})_\mathrm{back})).$$
 Here, $\theta_p:C^p(K;{\mathrm{Ad}_
\rho})\rightarrow \mathfrak{g}$ is a $\mathbb{Z}[\pi_1(\Sigma_{g,0})]$-module homomorphism.
\end{definition}

 Note that  $\smile_{\mathcal{B}}$ can
be extended to cohomologies
$$ \smile_{\mathcal{B}}:H^i(\Sigma_{g,0};{\mathrm{Ad}_
\rho})   \times   H^{j}(\Sigma_{g,0};{\mathrm{Ad}_
\rho})   \to   H^{i+j}(\Sigma_{g,0};\mathbb{C}).
$$

Let $K'$ be the dual cell decomposition of $\Sigma_{g,0}$ associated
to the cell decomposition $K$. Assume also that cells $\sigma\in
K,$ $\sigma'\in K'$ meet at most once. This assumption is not loss
of generality since the Reidemeister torsion is invariant under subdivision. Let $c'_p$ be a basis of
$C_p(\widetilde{K'};\mathbb{Z})$ associated to the basis $c_p$ of
$C_p(\widetilde{K};\mathbb{Z}),$ and let
$\mathbf{c}'_p=c'_p\otimes_{\rho} \mathcal{A}$ be the basis for
$C_p(K';{\mathrm{Ad}_\rho}),$ where $\mathcal{A}$ is a $\mathcal{B}$-orthonormal basis of $\mathfrak{g}.$

\begin{definition}
The intersection form 
$
(\cdot,\cdot)_{i,2-i}:C_i(K;{\mathrm{Ad}_
\rho})\times C_{2-i}(K';{\mathrm{Ad}_
\rho})\to\mathbb{C}
$ is defined by $$(\sigma_1\otimes
t_1,\sigma_2\otimes t_2)_{i,2-i}=\sum_{\gamma\in\pi_1(\Sigma_{g,0})}
\sigma_1.(\gamma\cdot\sigma_2)\; \mathcal{B}(t_1,\gamma\cdot t_2),$$ 
where $``."$ is the intersection number pairing. Clearly, $``."$ is
compatible with the usual boundary operator and thus
$(\cdot,\cdot)_{i,2-i}$ are $\partial$-compatible. It is also
anti-symmetric since the pairing
$``."$ is anti-symmetric and $\mathcal{B}$ is invariant under adjoint
action.
\end{definition}


The intersection form can be naturally extended to twisted homologies. 
So we have the non-degenerate form
\begin{equation}\label{intersection formH}
(\cdot,\cdot)_{i,2-i}:H_i(\Sigma_{g,0};{\mathrm{Ad}_
\rho})\times H_{2-i}(\Sigma_{g,0};{\mathrm{Ad}_
\rho})\to \mathbb{C}.
\end{equation}

By the isomorphisms induced by the Kronecker pairing and the
intersection form, we have the following Poincar\'{e} duality isomorphisms
$$
H^{2-i}(\Sigma_{g,0};{\mathrm{Ad}_
\rho})\stackrel{\scriptsize{\mbox{Kronecker pairing}}}{\cong}
H_{2-i}(\Sigma_{g,0};{\mathrm{Ad}_
\rho})^{\ast}\stackrel{\scriptsize{\mbox{Intersection form}}}{\cong}H_i(\Sigma_{g,0};{\mathrm{Ad}_\rho}) .$$
For a good representation $\rho \in R^{g}(\pi_1(\Sigma_{g,0}),G),$ by Theorem \ref{gold1}, we get
$$H_0(\Sigma_{g,0};{\mathrm{Ad}_\rho})=H_2(\Sigma_{g,0};{\mathrm{Ad}_
\rho})=0,$$
$$H^0(\Sigma_{g,0};{\mathrm{Ad}_
\rho})=H^2(\Sigma_{g,0};{\mathrm{Ad}_\rho})=0.$$
The following composition
 $$ \omega_{_{\Sigma_{g,0}}} :
H^1(\Sigma_{g,0};{\mathrm{Ad}_
\rho})\times
H^1(\Sigma_{g,0};{\mathrm{Ad}_
\rho})\stackrel{\smile_B}{\longrightarrow}
H^2(\Sigma_{g,0};\mathbb{C}){\longrightarrow}\mathbb{C}$$
is known as the \textit{Atiyah–Bott–Goldman–Narasimhan symplectic form} for the Lie group $G,$ where the second map sends the fundamental class of 
$\Sigma_{g,0}$ to $1 \in \mathbb{C}.$

In the following theorem, Witten proves that the twisted Reidemister torsion of 
$\Sigma_{g,0}$ equals a power of the Atiyah–Bott–Goldman–Narasimhan symplectic form on $\mathcal{R}^{g}(\pi_1(\Sigma_{g,0}),G).$ 

 \begin{theorem}[\cite{Witten}]\label{wittheo}
 If $\rho \in R^{g}(\pi_1(\Sigma_{g,0}),G)$ is a good representation and $\mathbf{h}^1_{\Sigma_{g,0}}$ is the basis of $H^1(\Sigma_{g,0};{\mathrm{Ad}_\rho}),$ then 
$$|\Omega_{_{\Sigma_{g,0}}}(\wedge \mathbf{h}^1_{\Sigma_{g,0}})|=\left|\mathbb{T}(\Sigma_{g,0},\{0,\mathbf{h}^1_{\Sigma_{g,0}},0\})\right|= \frac{\left|\omega_{_{\Sigma_{g,0}}}^m(\wedge \mathbf{h}^1_{\Sigma_{g,0}})\right|} {m!},$$
 where $\mathrm{dim}(H^1(\Sigma_{g,0};{\mathrm{Ad}_\rho}))=2m.$
 \end{theorem}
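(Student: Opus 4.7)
The plan is to apply Witten's theorem (Theorem \ref{wittheo}) to both sides of the desired equality in order to replace the powers of the Atiyah--Bott--Goldman--Narasimhan symplectic form by absolute values of twisted Reidemeister torsions, and then to invoke the multiplicative gluing formula for torsions promised in the introduction (Theorem \ref{remnwp}). Since $\dim \mathfrak{g} = 3$ for both $G = SL_2(\mathbb{C})$ and $G = PSL_2(\mathbb{C})$, Theorem \ref{as1} yields $\dim H^1(\Sigma_{g,0};\mathrm{Ad}_\rho) = (2g-2)\cdot 3 = 2(6k-3)$ and $\dim H^1(\Sigma_{2,0}^i;\mathrm{Ad}_{\rho|_{\Sigma_{2,0}^i}}) = 2\cdot 3$, so the Witten exponents are $n = 6k-3$ for the closed surface and $n = 3$ for each genus-two piece. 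By Theorem \ref{wittheo}, condition (\textbf{C}) guarantees that on the one hand
\begin{equation*}
\left|\omega_{_{\Sigma_{g,0}}}^{6k-3}(\wedge \mathbf{h}^1_{\Sigma_{g,0}},\wedge \mathbf{h}^1_{\Sigma_{g,0}})\right| = (6k-3)!\,\left|\mathbb{T}(\Sigma_{g,0};\rho,\{0,\mathbf{h}^1_{\Sigma_{g,0}},0\})\right|,
\end{equation*}
and on the other hand
\begin{equation*}
\prod_{i=1}^{k}\left|\omega_{_{\Sigma_{2,0}^i}}^{3}(\wedge \mathbf{h}^1_{\Sigma_{2,0}^i},\wedge \mathbf{h}^1_{\Sigma_{2,0}^i})\right| = 6^{k}\prod_{i=1}^{k}\left|\mathbb{T}(\Sigma_{2,0}^i;\rho|_{\Sigma_{2,0}^i},\{0,\mathbf{h}^1_{\Sigma_{2,0}^i},0\})\right|.
\end{equation*}
Since $\mathcal{M}_k = (6k-3)!\,6^{-k}$, the claim reduces to producing bases $\mathbf{h}^1_{\Sigma_{2,0}^i}$ for which
\begin{equation*}
\left|\mathbb{T}(\Sigma_{g,0};\rho,\{0,\mathbf{h}^1_{\Sigma_{g,0}},0\})\right| = \prod_{i=1}^{k}\left|\mathbb{T}(\Sigma_{2,0}^i;\rho|_{\Sigma_{2,0}^i},\{0,\mathbf{h}^1_{\Sigma_{2,0}^i},0\})\right|.
\end{equation*}

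To obtain this torsion identity I would realize the connected sum as an iterated CW gluing and apply Theorem \ref{prt1} twice, which is the content of Theorem \ref{remnwp}. Each closed piece decomposes as $\Sigma_{2,0}^i = \Sigma_{2,1}^i \cup_{\mathbb{S}^1_i} \mathbb{D}_i^2$ with $\Sigma_{2,1}^i = \Sigma_{2,0}^i \setminus \mathbb{D}_i^2$, and $\Sigma_{g,0}$ itself is recovered by gluing the bordered pieces $\Sigma_{2,1}^i$ along the circles $\mathbb{S}^1_i$. By Theorem \ref{gold1} and condition (\textbf{C}), the twisted cohomology of each closed piece is concentrated in degree one, so the Mayer--Vietoris long exact sequence (\ref{es12es}) becomes rigid enough that, via appropriate sections, the bases $\mathbf{h}^1_{\Sigma_{2,0}^i}$ are determined from the prescribed data $\mathbf{h}^1_{\Sigma_{g,0}}$ and $\mathbf{h}^0_{\mathbb{D}_i^2} = \varphi_0(\mathbf{c}^0)$ on the disks and from the natural bases of $H^{\ast}(\mathbb{S}^1_i;\mathrm{Ad}_{\rho|})$. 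This provides the candidate bases required in the theorem's statement.

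The main obstacle is to verify that, under these compatible bases, all auxiliary contributions arising from the two gluings collapse in absolute value to $1$. Concretely, one must check that the circle torsions $|\mathbb{T}(\mathbb{S}^1_i;\mathrm{Ad}_{\rho|})|$ appear with matching powers in the numerator and denominator of the two applications of Theorem \ref{prt1} (once when $\Sigma_{g,0}$ is assembled from the $\Sigma_{2,1}^i$, and once when each $\Sigma_{2,1}^i$ is completed back to $\Sigma_{2,0}^i$ by the disk $\mathbb{D}_i^2$), that $|\mathbb{T}(\mathbb{D}_i^2;\mathrm{Ad}_{\rho|})|$ is trivial with the basis induced by $\mathbf{h}^0_{\mathbb{D}_i^2}=\varphi_0(\mathbf{c}^0)$, and that the two Mayer--Vietoris corrective torsions $|\mathbb{T}(\mathcal{H}^{\ast})|$ produced by the gluings are mutually inverse under the coherent choice of sections. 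Once this cancellation is in place, Theorem \ref{remnwp} delivers the torsion identity displayed above, and multiplying both sides by $(6k-3)!\,6^{-k} = \mathcal{M}_k$ recovers Theorem \ref{mainthmclosed2}.
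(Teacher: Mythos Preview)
Your proposal is not a proof of Theorem~\ref{wittheo} (that is Witten's result, merely cited in the paper) but of Theorem~\ref{mainthmclosed2}; I compare it with the paper's proof of the latter in Section~4.3.

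The overall strategy matches the paper exactly: apply Witten's theorem on each side to convert symplectic volumes into Reidemeister torsions, then appeal to the gluing formula of Theorem~\ref{remnwp}. Your dimension counts and the identification $\mathcal{M}_k=(6k-3)!\,6^{-k}$ are correct.

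There is, however, a genuine gap in your handling of the auxiliary factors. You claim the circle torsions $|\mathbb{T}(\mathbb{S}^1_i)|$ cancel because they ``appear with matching powers in the numerator and denominator of the two applications of Theorem~\ref{prt1}''. They do not. Theorem~\ref{remnwp}---which already packages both rounds of gluing, with the disk torsions normalized to $1$ via Lemma~\ref{rem1dsk} and with each Mayer--Vietoris corrective term shown to equal $1$ by the basis constructions of Theorems~\ref{mainth1} and~\ref{mainth2}---still leaves the uncancelled product $\prod_{i=1}^{k-1}\mathbb{T}(\mathbb{S}^1_i)$. The connected-sum assembly involves $k-1$ separating circles, while capping the bordered pieces back to closed surfaces involves $k$ disks; these are not the same circles with opposite exponents, and no power-counting argument eliminates the residual $k-1$ factors. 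The paper disposes of them by a separate input, equation~(\ref{fn0}), namely $|\mathbb{T}(\mathbb{S}^1_i;\rho_{|_{\mathbb{S}^1_i}},\{\mathbf{h}^0_{\mathbb{S}^1_i},\mathbf{h}^1_{\mathbb{S}^1_i}\})|=1$, quoted from \cite{sozen2014}. Likewise, the two corrective torsions are not ``mutually inverse'' as you suggest; each is individually forced to be $1$ by the explicit basis choices in Theorems~\ref{mainth1} and~\ref{mainth2}. Without invoking $|\mathbb{T}(\mathbb{S}^1_i)|=1$, your argument does not close.
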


 \subsection{The holomorphic symplectic volume form on $\mathcal{R}^g(\pi_1(\Sigma_{g,n}),\partial(\Sigma_{g,n}),G)_{\rho_0}$}
\label{sec:6}


In order to obtain a holomorphic symplectic volume form on $\mathcal{R}(\partial(\Sigma_{g,n}),G),$ Heusener and Porti identified the representation variety of the circle $\mathbb{S}^1$ with $G,$ by mapping each representation to the image of a fixed generator of $\pi_1(\mathbb{S}^1).$ They restricted to representations which map the generators of $\pi_1(\mathbb{S}^1)$ to regular elements \cite{Poritnw}.

\begin{definition}
A representation $\rho:\pi_1(\mathbb{S}^1)\rightarrow G$ is called regular if the image of the generator of $\pi_1(\mathbb{S}^1)$ is a regular element of $G.$
\end{definition}

 Let $\mathbf{r}$ be the rank of simply-connected $G.$ Then Steinberg’s Theorem \cite{RS2} gives the following result
$$\mathcal{R}^{\mathrm{reg}}(\pi_1(\mathbb{S}^1),G)\cong G^{\mathrm{reg}}/G \cong \mathbb{C}^\mathbf{r}.$$
By using the above isomorphisms, the following proposition gives a natural isomorphism as well.
 \begin{proposition}\cite[Corollary 2.12]{Poritnw} \label{prt23}
 If $G$ is simply-connected, then the Steinberg map induces a natural isomorphism
  $$H^1(\mathbb{S}^1;{\mathrm{Ad}_{\rho}})\cong T_{[\rho]}\mathcal{R}^{\mathrm{reg}}(\pi_1(\mathbb{S}^1),G)\cong \mathbb{C}^\mathbf{r}.$$
 \end{proposition}

In \cite{Poritnw}, it was introduced a holomorphic volume form on the space of representations of the circle. Precisely,
\begin{definition}\label{defcrl1}(\cite{Poritnw})
Let $\rho:\pi_1(\mathbb{S}^1)\rightarrow G$ be a regular representation. For any basis $\mathbf{h}^{1}_{\mathbb{S}^1}$ of $H^1(\mathbb{S}^1;{\mathrm{Ad}_{\rho}}),$ the form $\nu=\overset{\mathbf{r}}{\wedge} H^1(\mathbb{S}^1;{\mathrm{Ad}_{\rho}})\rightarrow \mathbb{C}$ is defined by the following formula
\begin{equation}\label{eq:def1}
\nu(\wedge \mathbf{h}^{1}_{\mathbb{S}^1})= \pm \sqrt{ {\mathbb{T}(\mathbb{S}^1;\rho,
\{\mathbf{h}^{0}_{\mathbb{S}^1},\mathbf{h}^{1}_{\mathbb{S}^1}\})}} <\wedge\mathbf{h}^{1}_{\mathbb{S}^1},\wedge\mathbf{h}^0_{\mathbb{S}^1} >.
\end{equation}      
Here, $<\cdot,\cdot>$ is the duality pairing $H^1(\mathbb{S}^1;{\mathrm{Ad}_{\rho}})\times H^0(\mathbb{S}^1;{\mathrm{Ad}_{\rho}}) \rightarrow \mathbb{C}$ and the value $\nu(\wedge \mathbf{h}^{1}_{\mathbb{S}^1}) \in \mathbb{C}$ is independent of $\mathbf{h}^{1}_{\mathbb{S}^1}.$
\end{definition}

Let $\rho_0$ be a good and $\partial$–regular representation. Then the tangent space of $\mathcal{R}^g(\pi_1(\Sigma_{g,n}), \partial(\Sigma_{g,n}), G)_{\rho_0}$ is the kernel of the map 
$i: H^1(\Sigma_{g,n};\mathrm{Ad}_\rho) \rightarrow H^1(\partial(\Sigma_{g,n});\mathrm{Ad}_\rho)$ induced by inclusion (see \cite[Proposition 2.10]{Poritnw}). 
The long exact sequence in cohomology of the pair is 
\begin{eqnarray*}
&& 0 \rightarrow H^0(\partial(\Sigma_{g,n});\mathrm{Ad}_\rho)\stackrel{\beta} {\rightarrow} H^1(\Sigma_{g,n},\partial(\Sigma_{g,n});\mathrm{Ad}_\rho) \\
&&\quad \quad \quad \quad \quad \quad \;\;\tikz\draw[->,rounded corners](5.25,0.4)--(5.25,0)--(1,0)--(1,-0.4) node[yshift=4.5ex,xshift=15.0ex] {j}; \nonumber\\ 
&& \quad \quad \quad H^1(\Sigma_{g,n};\mathrm{Ad}_\rho) \stackrel{i} {\rightarrow} H^1(\partial(\Sigma_{g,n});\mathrm{Ad}_\rho) \rightarrow 0.
\end{eqnarray*}
For $a,b \in \;\mathrm{ker}(i),$ we define
\begin{equation}\label{frm2}
\omega_{\Sigma_{g,n}}(a,b)=<\tilde a,b>=<a,\tilde b>.
\end{equation}
Here, $a, \tilde{b} \in H^1(\Sigma_{g,n}, \partial(\Sigma_{g,n}); Ad_{\rho})$ satisfy $j(\tilde{a})=a,$ 
$j(\tilde{b})=b.$ The form $\omega_{\Sigma_{g,n}}$ is symplectic on the relative character variety $\mathcal{R}^g(\pi_1(\Sigma_{g,n}),\partial(\Sigma_{g,n}),G)_{\rho_0}$ by 
\cite{GLD,KGJH,SLAW}.

Set $\mathbf{d}=\mathrm{dim}(G)$ and $\mathbf{r}=\mathrm{rank}(G).$ Let $\mathbf{n}$ denote the number of the boundary components of $\Sigma_{g,n},$ where $\partial(\Sigma_{g,n})= \mathbb{S}^1_1 \sqcup \cdots \sqcup  \mathbb{S}^1_n.$ Then the following result is obtained from Definition \ref{def:bndry1}.
\begin{remark} \cite{Poritnw} \label{rem:bndry1}
For any $\partial$-regular representation $\rho:\pi_1(\Sigma_{g,n})\rightarrow G,$ the restriction $\rho_{|_{\pi_1(\mathbb{S}^1_i)}}:\pi_1(\mathbb{S}^1_i)\rightarrow G$ on each boundary component $\mathbb{S}^1_i$ is regular.
\end{remark}
 Let us denote the form related to the restriction by $\nu_i:\overset{\mathbf{r}}{\wedge} H^1(\mathbb{S}^1_i;\mathrm{Ad}_\rho)\rightarrow \mathbb{C}$ as in equation (\ref{eq:def1}) and the value $\nu_i(\wedge \mathbf{h}^{1}_{\mathbb{S}_i^1}) \in \mathbb{C}$ by $\nu_i^*.$ The following result generalizes Theorem \ref{wittheo} to surfaces with boundary. Namely, it expresses the volume form on 
 $\mathcal{R}^g(\pi_1(\Sigma_{g,n}),G)$ as the product of a volume form on $\mathcal{R}^g(\pi_1(\Sigma_{g,n}),\partial(\Sigma_{g,n}),G)_{\rho_0}$ and a volume form on $\mathcal{R}(\partial(\Sigma_{g,n}),G).$

\begin{theorem}\cite[Theorem 1.3]{Poritnw}\label{prtitheo}
Let $\rho_0:\pi_1(\Sigma_{g,n})\rightarrow G$ be a good and $\partial$-regular representation. Let $\mathbf{h}^1_{\Sigma_{g,n}}$ be a given basis of $H^1(\Sigma_{g,n};{\mathrm{Ad}_\rho}).$ Then, on $T_{[\rho_0]}\mathcal{R}^{g}(\pi_1(\Sigma_{g,n}),G),$ there is a volume form given as follows
$$|\Omega_{_{\Sigma_{g,n}}}(\wedge \mathbf{h}^1_{\Sigma_{g,n}})|=\left|\mathbb{T}(\Sigma_{g,n},\{0,\mathbf{h}^1_{\Sigma_{g,n}},0\})\right|= \frac{\left|\omega_{\Sigma_{g,n}}^m(\wedge \mathbf{h}^1_{\Sigma_{g,n}})\wedge \nu_1^* \wedge\ldots \wedge \nu_n^* \right|} {m!},$$
 where $m=\frac{1}{2}\mathrm{dim}(\mathcal{R}^g(\pi_1(\Sigma_{g,n}),\partial(\Sigma_{g,n}),G)_{\rho_0})=\frac{1}{2}(-\chi(\Sigma_{g,n})\mathbf{d}-\mathbf{nr}).$ 
 \end{theorem}

\section{Main Results}
\label{sec:7}

Along this section, for surfaces $\Sigma_{g,n},$ we assume that $g=2^t, k=g/2, t\geq 2$ and $\rho_0:\pi_1(\Sigma_{g,n})\rightarrow G$ is good and $\partial$-regular for each $n\geq 1.$
\subsection{Proof of Theorem  \ref{thm:mainthm}}
\label{sec:8}
To prove Theorem \ref{thm:mainthm}, we obtain the following auxiliary results.
\begin{theorem}\label{mainth1} 
Let $[\rho] \in \mathcal{R}^{g}(\pi_1(\Sigma_{g,2}),\partial(\Sigma_{g,2}),G)_{\rho_0}$ and let $\rho$ satisfy condition $C_2$ for the connected sum 
$$\Sigma_{g,2}=\Sigma_{k,1}^1 \# \Sigma_{k,1}^2.$$
 For given bases $\mathbf{h}^1_{\Sigma_{g,2}}$ and $\mathbf{h}^{i}_{\mathbb{S}^1}$ of $H^1(\Sigma_{g,2};\mathrm{Ad}_\rho)$ and $H^i(\mathbb{S}^1;{\mathrm{Ad}_{\rho_{|_{\mathbb{S}^1}}}}),$ $i=0,1,$ there exist bases $\mathbf{h}^1_{\Sigma_{k,2}^1}$ and $\mathbf{h}^1_{\Sigma_{k,2}^2}$ of $H^1(\Sigma_{k,2}^1;\mathrm{Ad}_{\rho_{|_{\Sigma_{k,2}^1}}})$ and $H^1(\Sigma_{k,2}^2;\mathrm{Ad}_{\rho_{|_{\Sigma_{k,2}^2}}}),$ respectively, such that the corrective term becomes one and the 
following formula is valid
\begin{eqnarray}\label{eq1mn1}
 \mathbb{T}(\Sigma_{g,2};\rho,\{0,\mathbf{h}^{1}_{\Sigma_{g,2}},0\})&=&\prod_{i=1}^{2}\mathbb{T}(\Sigma_{k,2}^i;{\rho_{|_{\Sigma_{k,2}^i}}},\{0,\mathbf{h}^{1}_{\Sigma_{k,2}^i},0\})\nonumber \\
& & \times\;{\mathbb{T}(\mathbb{S}^1;\rho_{|_{\mathbb{S}^1}},\{\mathbf{h}^{0}_{\mathbb{S}^1},\mathbf{h}^{1}_{\mathbb{S}^1}\})}^{-1}.
\end{eqnarray}
For $\Sigma_{g,0}=\Sigma_{k,0}^1 \# \Sigma_{k,0}^2,$ we just need the condition $C_1$ with $[\rho] \in \mathcal{R}^{g}(\pi_1(\Sigma_{g,0}),G).$ Moreover, the following formula holds
\begin{eqnarray}\label{eq2mn1}
 \mathbb{T}(\Sigma_{g,0};\rho,\{0,\mathbf{h}^{1}_{\Sigma_{g,0}},0\})&=&\prod_{i=1}^{2}\mathbb{T}(\Sigma_{k,1}^i;{\rho_{|_{\Sigma_{k,1}^i}}},\{0,\mathbf{h}^{1}_{\Sigma_{k,1}^i},0\})\nonumber \\
& & \times\;{\mathbb{T}(\mathbb{S}^1;\rho_{|_{\mathbb{S}^1}},\{\mathbf{h}^{0}_{\mathbb{S}^1},\mathbf{h}^{1}_{\mathbb{S}^1}\})}^{-1}.
\end{eqnarray}
\end{theorem}

\begin{proof} We only give the proof of the first case since the other can be obtained by using the same arguments. By condition $C_2,$ the restrictions 
$\rho_{|_{\pi_1(\Sigma_{k,2}^1)}},$ $\rho_{|_{\pi_1(\Sigma_{k,2}^2)}}$ are good and $\partial$-regular. From Remark \ref{rem:bndry1} it follows that the restriction $\rho_{|_{\mathbb{S}^1}}$ is regular. Consider the following short exact sequence of the twisted cochain complexes
\begin{eqnarray}\label{seq3a}
&& 0\to C^{\ast}(\Sigma_{g,2};\mathrm{Ad}_\rho) \rightarrow 
C^{\ast}(\Sigma_{k,2}^1;\mathrm{Ad}_{\rho_{|_{\Sigma_{k,2}^1}}})\oplus
C^{\ast}(\Sigma_{k,2}^2;\mathrm{Ad}_{\rho_{|_{\Sigma_{k,2}^2}}})\nonumber\\
 &&\quad \quad \quad\quad  \quad \tikz\draw[->,rounded corners](5.60,0.4)--(5.60,0)--(1,0)--(1,-0.4) node[yshift=4.5ex,xshift=15.0ex] {}; \nonumber\\ 
&&\quad \quad C^{\ast}(\mathbb{S}^1;{\mathrm{Ad}_{\rho_{|_{\mathbb{S}^1}}}}) \to 0.
\end{eqnarray}

By Theorem \ref{as1}, Theorem \ref{gold1} and Proposition \ref{prt23}, we have 
\begin{eqnarray}\label{eqy1a}
\left\{ \begin{array}{*{20}{l}}
&& \mathrm{dim}(H^1(\mathbb{S}^1;{\mathrm{Ad}_{\rho_{|_{\mathbb{S}^1}}}}))=\mathrm{dim}(H^0(\mathbb{S}^1;{\mathrm{Ad}_{\rho_{|_{\mathbb{S}
^1}}}}))=1,\\ 
&& \mathrm{dim}(H^1(\Sigma_{g,2};\mathrm{Ad}_\rho))=12k,\\ 
&& \mathrm{dim}(H^1(\Sigma_{k,2}^i;\mathrm{Ad}_{\rho_{|_{\Sigma_{k,2}}}}))=6k. 
 \end{array} \right.
\end{eqnarray}

By Theorem \ref{gold1} and equation (\ref{eqy1a}), the Mayer-Vietoris long exact sequence associated with the short exact sequence (\ref{seq3a}) becomes
\begin{eqnarray}\label{long3a}
 \nonumber &\mathcal{H}^{\ast}:& 0\longrightarrow H^0(\mathbb{S}^1;{\mathrm{Ad}_{\rho_{|_{\mathbb{S}^1}}}}) \stackrel{\alpha_0}{\longrightarrow} H^1(\Sigma_{g,2};\mathrm{Ad}_\rho)\\
 &&\quad \quad \quad \quad \quad\quad \quad \quad \; \tikz\draw[->,rounded corners](3.30,0.4)--(3.30,0)--(1,0)--(1,-0.4) node[yshift=4.5ex,xshift=8.0ex] {$\beta_0$}; \nonumber\\ 
 && H^1(\Sigma_{k,2}^1;\mathrm{Ad}_{\rho_{|_{\Sigma_{k,2}^1}}})\oplus H^1(\Sigma_{k,2}^2;\mathrm{Ad}_{\rho_{|_{\Sigma_{k,2}^2}}}) \nonumber\\
 &&\quad \quad \quad   \tikz\draw[->,rounded corners](2.90,0.4)--(2.90,0)--(1,0)--(1,-0.4) node[yshift=4.5ex,xshift=5.0ex] {$\partial$}; \nonumber\\ 
&& H^1(\mathbb{S}^1;{\mathrm{Ad}_{\rho_{|_{\mathbb{S}^1}}}}) \stackrel{\alpha_1} {\longrightarrow} 0 .
  \end{eqnarray}

First, we denote the vector spaces in $\mathcal{H}^{\ast}$ by 
 $C^p(\mathcal{H}^{\ast}),$ $p\in \{0,1,2,3\}.$ For each $p,$ the exactness of the sequence (\ref{long3a}) yields the following short exact sequence  
\begin{equation*}\label{wes1}
0\to B^{p}(\mathcal{H}^{\ast}) \hookrightarrow C^p(\mathcal{H}^{\ast})
 \twoheadrightarrow B^{p+1}(\mathcal{H}^{\ast}) \to 0.
\end{equation*}
For all $p,$  considering the isomorphism $s_{p}:B^{p+1}(\mathcal{H}^{\ast}) \rightarrow s_{p}(B^{p+1}(\mathcal{H}^{\ast}))\subseteq C^p(\mathcal{H}^{\ast})$ obtained by the First Isomorphism Theorem as a section of $C^p(\mathcal{H}^{\ast})\rightarrow B^{p+1}(\mathcal{H}^{\ast}),$ 
 we obtain 
\begin{equation}\label{eqw2a}
C^p(\mathcal{H}^{\ast})=s_{p}(B^{p+1}(\mathcal{H}^{\ast}))\oplus 
B^p(\mathcal{H}^{\ast}).
\end{equation}

 Let us consider the space $C^0(\mathcal{H}^{\ast})=H^0(\mathbb{S}^1;{\mathrm{Ad}_{\rho_{|_{\mathbb{S}^1}}}})$ in equation (\ref{eqw2a}). Since 
 $B^0(\mathcal{H}^{\ast})$ is trivial, we have
\begin{equation}\label{2eqnw1a}
C^0(\mathcal{H}^{\ast})=s_{0}(B^{1}(\mathcal{H}^{\ast}))\oplus 
B^0(\mathcal{H}^{\ast})=s_{0}(B^{1}(\mathcal{H}^{\ast})).
\end{equation}
By the exactness of sequence (\ref{long3a}), $B^{1}(\mathcal{H}^{\ast})=\mathrm{Im}(\alpha_0)$ is isomorphic to $H^0(\mathbb{S}^1;{\mathrm{Ad}_{\rho_{|_{\mathbb{S}^1}}}}).$ So, we can choose the basis $\mathbf{b}^1$ as $\alpha_0(\mathbf{h}^{0}_{\mathbb{S}^1}).$ Since the section $s_0$ is the inverse of $\alpha_0,$ by equation (\ref{2eqnw1a}), $s_{0}(\mathbf{b}^1)=\mathbf{h}^{0}_{\mathbb{S}^1}$ becomes the new basis for $C^0(\mathcal{H}^{\ast}).$ Hence, we obtain
\begin{equation}\label{reqnw2a}
[s_{0}(\mathbf{b}^1),\mathbf{h}_{\mathbb{S}^1}^0]=1. 
\end{equation}

We now consider the vector space $C^1(\mathcal{H}^{\ast})=H^1(\Sigma_{g,2};\mathrm{Ad}_\rho)$ in equation (\ref{eqw2a}). Then  we have
\begin{equation}\label{eqnw21a}
C^1(\mathcal{H}^{\ast})=s_{1}(B^{2}(\mathcal{H}^{\ast}))\oplus 
B^1(\mathcal{H}^{\ast}).
\end{equation}
Recall that $\mathrm{dim}(H^0(\mathbb{S}^1;{\mathrm{Ad}_{\rho_{|_{\mathbb{S}^1}}}}))=1$ and $\mathbf{h}^{1}_{\Sigma_{g,2}}=\left\{\mathbf{h}^{1,j}_{\Sigma_{g,2}}\right\}_{j=1}^{12k}$ is the given basis of $H^1(\Sigma_{g,2};\mathrm{Ad}_\rho).$ As $B^1(\mathcal{H}^{\ast})$ is a one-dimensional subspace of $C^1(\mathcal{H}^{\ast}),$ there is a non-zero vector 
$(a_{1,1},a_{1,2},\ldots,a_{1,2k})$ such that
\begin{equation}\label{basis1a}
\mathbf{b}^1=\left\{\sum_{j=1}^{12k}{a_{1,j} \; \mathbf{h}^{1,j}_{\Sigma_{g,2}}}
 \right\}.
\end{equation}
Since $s_{1}(B^{2}(\mathcal{H}^{\ast}))$ is a $(12k-1)$-dimensional subspace of $C^1(\mathcal{H}^{\ast}),$ there are non-zero vectors 
$(a_{i,1},a_{i,2},\ldots,a_{i,12k})$ for $i\in \{2,\ldots,12k\}$ such that
\begin{equation*}
\left\{\sum_{j=1}^{12k}{a_{i,j} \; \mathbf{h}^{1,j}_{\Sigma_{g,2}}}
 \right\}_{i=2}^{12k}
\end{equation*}
becomes a basis for $s_{1}(B^{2}(\mathcal{H}^{\ast}))$ and $A=[a_{ij}]$ is a non-singular matrix. Let us choose the basis of $s_{1}(B^{2}(\mathcal{H}^{\ast}))$ as 
\begin{equation*}
s_{1}(\mathbf{b}^2)=\left \{\det(A)^{-1}\left[\sum_{j=1}^{12k}{a_{2,j} \; \mathbf{h}^{1,j}_{\Sigma_{g,2}}}\right],\left\{\sum_{j=1}^{12k}{a_{i,j} \; \mathbf{h}^{1,j}_{\Sigma_{g,2}}}
 \right\}_{i=3}^{12k} \right\}.
\end{equation*}
From equation (\ref{eqnw21a}) it follows that
$s_{1}(\mathbf{b}^2)\sqcup \mathbf{b}^1$ is the new basis for 
$C^1(\mathcal{H}^{\ast}).$ Thus, we get
\begin{equation}\label{esrew1a}
[s_{1}(\mathbf{b}^2)\sqcup \mathbf{b}^1,\mathbf{h}^{1}_{\Sigma_{g,2}}]=1. 
\end{equation}

If we use equation (\ref{eqw2a}) for the space $C^2(\mathcal{H}^{\ast})=H^1(\Sigma_{k,2}^1;\mathrm{Ad}_{\rho_{|_{\Sigma_{k,2}^1}}})\oplus H^1(\Sigma_{k,2}^2;\mathrm{Ad}_{\rho_{|_{\Sigma_{k,2}^2}}}),$ then we have
\begin{equation}\label{neweq6a}
C^2(\mathcal{H}^{\ast})=s_{2}(B^{3}(\mathcal{H}^{\ast}))\oplus 
B^2(\mathcal{H}^{\ast}).
\end{equation} 
Since $B^{3}(\mathcal{H}^{\ast})=H^1(\mathbb{S}^1;{\mathrm{Ad}_{\rho_{|_{\mathbb{S}^1}}}}),$ we can choose the basis $\mathbf{b}^3$ as $\mathbf{h}^{1}_{\mathbb{S}^1}.$ Hence, 
$s_2(\mathbf{b}^3)=s_2(\mathbf{h}^{1}_{\mathbb{S}^1})$ becomes a basis of $s_{2}(B^{3}(\mathcal{H}^{\ast})).$ In the previous step, we chose the basis of $B^2(\mathcal{H}^{\ast})$ as follows
\begin{eqnarray*}
&&\mathbf{b}^2=\beta_0(s_{1}(\mathbf{b}^2))\\ \nonumber
&&\quad \; =\left \{ \det(A)^{-1}\left[\sum_{j=1}^{12k}{a_{2,j} \; \beta_0(\mathbf{h}^{1,j}_{\Sigma_{g,2}}})\right],\left\{\sum_{j=1}^{12k}{a_{i,j} \; \beta_0(\mathbf{h}^{1,j}_{\Sigma_{g,2}}})
 \right\}_{i=3}^{12k} \right\}.
\end{eqnarray*}
Let us denote the basis elements of $\mathbf{b}^2$ as $\mathbf{b}^{2,j}$ for 
$j\in \{1,\ldots,12k-1\}.$ From equation (\ref{neweq6a}) it follows that $s_{2}(\mathbf{b}^3)\sqcup \mathbf{b}^2$ becomes the new basis for $C^2(\mathcal{H}^{\ast}).$ As $H^1(\Sigma_{k,2}^1;\mathrm{Ad}_{\rho_{|_{\Sigma_{k,2}^1}}})$ and $H^1(\Sigma_{k,1}^2;\mathrm{Ad}_{\rho_{|_{\Sigma_{k,1}^2}}})$ are 
$(6k)$-dimensional subspaces of $(12k)$-dimensional space
$C^2(\mathcal{H}^{\ast}),$ there are non-zero vectors 
$(b_{{i,1}},\cdots,b_{{i,12k}})$ for $i\in \{1,\ldots,12k\}$ such that
 $$\left\{\sum_{j=1}^{12k-1}b_{i,j}\mathbf{b}^{2,j}
+ b_{i,12k}s_2(\mathbf{h}^{1}_{\mathbb{S}^{1}})\right\}_{i=1}^{6k}$$
 and 
$$\left\{\sum_{j=1}^{12k-1}b_{i,j}\mathbf{b}^{2,j}
+ b_{i,12k}s_2(\mathbf{h}^{1}_{\mathbb{S}^{1}})\right\}_{i=6k-1}^{12k}$$ 
are bases of
$H^1(\Sigma_{k,2}^1;\mathrm{Ad}_{\rho_{|_{\Sigma_{k,2}^1}}})$ and 
$H^1(\Sigma_{k,2}^2;\mathrm{Ad}_{\rho_{|_{\Sigma_{k,2}^2}}}),$ respectively. Moreover, the matrix $B=[b_{i,j}]$ has non-zero determinant. 
Let us take the bases of
$H^1(\Sigma_{k,2}^1;\mathrm{Ad}_{\rho_{|_{\Sigma_{k,2}^1}}})$ and 
$H^1(\Sigma_{k,2}^2;\mathrm{Ad}_{\rho_{|_{\Sigma_{k,2}^2}}})$ as follows

\begin{eqnarray*}
\mathbf{h}^1_{\Sigma_{k,2}^1}&=& \left\{\det(B)^{-1}\left[\sum_{j=1}^{12k-1}b_{1,j}\mathbf{b}^{2,j}
+ b_{1,12k}s_2(\mathbf{h}^{1}_{\mathbb{S}^{1}})\right],
\right.\\
&&  \; \; \; \left. \left\{\sum_{j=1}^{12k-1}b_{i,j}\mathbf{b}^{2,j}
+ b_{i,12k}s_2(\mathbf{h}^{1}_{\mathbb{S}^{1}})\right\}_{i=2}^{6k}\right\},\\
\mathbf{h}^1_{\Sigma_{k,2}^2}&=&\left\{\sum_{j=1}^{12k-1}b_{i,j}\mathbf{b}^{2,j}
+ b_{i,12k}s_2(\mathbf{h}^{1}_{\mathbb{S}^{1}})\right\}_{i=6k-1}^{12k}.
\end{eqnarray*}
Hence, $\mathbf{h}^1_{\Sigma_{k,2}^1}\sqcup \mathbf{h}^1_{\Sigma_{k,2}^2}$ is the initial basis for $C^2(\mathcal{H}^{\ast}),$ and thus we obtain
 \begin{equation}\label{eqwmol5a}
  [s_{2}(\mathbf{b}^3)\sqcup \mathbf{b}^2,\mathbf{h}^1_{\Sigma_{k,2}^1}\sqcup \mathbf{h}^1_{\Sigma_{k,2}^2}]=1.
\end{equation}
 
 We now consider equation (\ref{eqw2a}) for $C^3(\mathcal{H}^{\ast})= H^1(\mathbb{S}^1;{\mathrm{Ad}_{\rho_{|_{\mathbb{S}^1}}}}).$ Then we get
 \begin{equation}\label{eqnwry1a}
C^3(\mathcal{H}^{\ast})=s_{3}(B^{4}(\mathcal{H}^{\ast}))\oplus 
B^3(\mathcal{H}^{\ast})=B^3(\mathcal{H}^{\ast}).
\end{equation}
In the previous step, we chose the basis of $B^3(\mathcal{H}^{\ast})$ as  
$\mathbf{b}^3=\mathbf{h}^{1}_{\mathbb{S}^1}.$ Thus, by equation 
(\ref{eqnwry1a}), $\mathbf{b}^3$ becomes the new basis of $C^3(\mathcal{H}^{\ast})$. Since $\mathbf{h}^{1}_{\mathbb{S}^1}$ is also the given basis of $C^3(\mathcal{H}^{\ast}),$ we have
 \begin{equation}\label{eqwm2a}
  [\mathbf{b}^3,\mathbf{h}^{1}_{\mathbb{S}^1}]=1.
\end{equation}

Combining equations (\ref{reqnw2a}), (\ref{esrew1a}),  
(\ref{eqwmol5a}), and (\ref{eqwm2a}), the corrective term becomes one as follows
\begin{equation}\label{lonh4a}
\mathbb{T}\left( \mathcal{H}^{\ast},\{\mathbf{h}^p\}_{p=0}^3,\{0\}_{p=0}^3\right)
=\prod_{p=0}^3 \left[s_p(\mathbf{b}^{p+1})\sqcup
\mathbf{b}^p, \mathbf{h}^p\right]^{(-1)^{(p+1)}}=1.
\end{equation} 

Theorem \ref{prt1} and equation (\ref{lonh4a}) finish the proof of Theorem \ref{mainth1}.
\end{proof}

\begin{theorem}\label{mainth2} 
Let $[\rho] \in \mathcal{R}^{g}(\pi_1(\Sigma_{g,1}),\partial(\Sigma_{g,1}),G)_{\rho_0}$ and let $\rho$ satisfy condition $C_2$ for the connected sum $\Sigma_{g,1}=\Sigma_{k,1} \# \Sigma_{k,0}.$ For given bases $\mathbf{h}^1_{\Sigma_{g,1}}$ and $\mathbf{h}^{i}_{\mathbb{S}^1}$ of $H^1(\Sigma_{g,1};\mathrm{Ad}_\rho)$ and $H^i(\mathbb{S}^1;{\mathrm{Ad}_{\rho_{|_{\mathbb{S}^1}}}}),$ $i=0,1,$ there exist respectively homology bases $\mathbf{h}^1_{\Sigma_{k,2}}$ and $\mathbf{h}^1_{\Sigma_{k,1}}$ of $H^1(\Sigma_{k,2};\mathrm{Ad}_{\rho_{|_{\Sigma_{k,2}}}})$ and $H^1(\Sigma_{k,1};\mathrm{Ad}_{\rho_{|_{\Sigma_{k,1}}}})$ such that the corrective term becomes one and the 
following formula is valid
\begin{eqnarray*}
 \mathbb{T}(\Sigma_{g,1};\rho,\{0,\mathbf{h}^{1}_{\Sigma_{g,1}},0\})&=&\mathbb{T}(\Sigma_{k,2};{\rho_{|_{\Sigma_{k,2}}}},\{0,\mathbf{h}^{1}_{\Sigma_{k,2}},0\}) \\
& & \times\;\mathbb{T}(\Sigma_{k,1};{\rho_{|_{\Sigma_{k,1}}}},\{0,\mathbf{h}^{1}_{\Sigma_{k,1}},0\}) \\
& & \times\;{\mathbb{T}(\mathbb{S}^1;\rho_{|_{\mathbb{S}^1}},\{\mathbf{h}^{0}_{\mathbb{S}^1},\mathbf{h}^{1}_{\mathbb{S}^1}\})}^{-1}.
\end{eqnarray*}
\end{theorem}

\begin{proof}
By condition $C_2,$ we obtain good and $\partial$-regular restrictions $\rho_{|_{\pi_1(\Sigma_{k,2})}},$ $\rho_{|_{\pi_1(\Sigma_{k,1})}}.$ Following 
Remark~\ref{rem:bndry1}, we conclude that the restriction $\rho_{|_{\mathbb{S}^1}}$ is regular. Consider the following short exact sequence of the twisted cochain complexes
\begin{eqnarray*}\label{seq3}
&& 0\to C^{\ast}(\Sigma_{g,1};\mathrm{Ad}_\rho) \rightarrow 
C^{\ast}(\Sigma_{k,2};\mathrm{Ad}_{\rho_{|_{\Sigma_{k,2}}}})\oplus
C^{\ast}(\Sigma_{k,1};\mathrm{Ad}_{\rho_{|_{\Sigma_{k,1}}}})\\
 &&\quad \quad \quad \quad \quad \; \tikz\draw[->,rounded corners](5.50,0.4)--(5.50,0)--(1,0)--(1,-0.4) node[yshift=4.5ex,xshift=15.0ex] {}; \nonumber\\ 
&&\quad \quad C^{\ast}(\mathbb{S}^1;{\mathrm{Ad}_{\rho_{|_{\mathbb{S}^1}}}}) \to 0.
\end{eqnarray*}

By Theorem \ref{gold1}, the Mayer-Vietoris long exact sequence associated with the above short exact sequence becomes
\begin{eqnarray}\label{long3}
 \nonumber &\mathcal{H}^{\ast}:& 0\longrightarrow H^0(\mathbb{S}^1;{\mathrm{Ad}_{\rho_{|_{\mathbb{S}^1}}}}) \stackrel{\alpha_0}{\longrightarrow} H^1(\Sigma_{g,1};\mathrm{Ad}_\rho)\\
 &&\quad \quad \quad \quad \quad \quad \quad \quad \;\; \tikz\draw[->,rounded corners](3.25,0.4)--(3.25,0)--(1,0)--(1,-0.4) node[yshift=4.5ex,xshift=8.0ex] {$\beta_0$}; \nonumber\\ 
 && H^1(\Sigma_{k,2};\mathrm{Ad}_{\rho_{|_{\Sigma_{k,2}}}})\oplus H^1(\Sigma_{k,1};\mathrm{Ad}_{\rho_{|_{\Sigma_{k,1}}}}) \nonumber\\
 &&\quad \quad \quad   \tikz\draw[->,rounded corners](2.90,0.4)--(2.90,0)--(1,0)--(1,-0.4) node[yshift=4.5ex,xshift=7.0ex] {$\partial$}; \nonumber\\ 
&& H^1(\mathbb{S}^1;{\mathrm{Ad}_{\rho_{|_{\mathbb{S}^1}}}}) \stackrel{\alpha_1} {\longrightarrow} 0 .
  \end{eqnarray}

By using the same arguments stated in the proof of Theorem \ref{mainth1}, we conclude that the corrective term becomes one. Precisely,
\begin{equation}\label{lonh4}
\mathbb{T}\left( \mathcal{H}^{\ast},\{\mathbf{h}^p\}_{p=0}^3,\{0\}_{p=0}^3\right)
=\prod_{p=0}^3 \left[s_p(\mathbf{b}^{p+1})\sqcup
\mathbf{b}^p, \mathbf{h}^p\right]^{(-1)^{(p+1)}}=1.
\end{equation} 
Theorem \ref{prt1} and equation (\ref{lonh4}) complete the proof of Theorem \ref{mainth2}.
\end{proof}

Note that the basis $\mathbf{h}^{\ast}_{\mathbb{S}^1}$ of $H^{\ast}(\mathbb{S}^1;{\mathrm{Ad}_{\rho_{|_{\mathbb{S}^1}}}})$ is arbitrary in Theorem \ref{mainth1} and Theorem \ref{mainth2}. Let us fix these bases as $\mathbf{h}^{1}_{\mathbb{S}^1}$ and $\mathbf{h}^{0}_{\mathbb{S}^1}$ for $H^{1}(\mathbb{S}^1;{\mathrm{Ad}_{\rho_{|_{\mathbb{S}^1}}}})$ and $H^{0}(\mathbb{S}^1;{\mathrm{Ad}_{\rho_{|_{\mathbb{S}^1}}}}).$ Then all the twisted torsions of circles 
$\mathbb{S}^1$ with these fix bases are equal. If we use Theorem \ref{mainth1} and Theorem \ref{mainth2} inductively, we get the following result.

\begin{theorem}\label{mainthm3}Let $[\rho] \in \mathcal{R}^{g}(\pi_1(\Sigma_{g,2}),\partial(\Sigma_{g,2}),G)_{\rho_0}$ and let $\rho$ satisfy condition $C_2.$ Fix the basis of $H^j(\mathbb{S}^1;{\mathrm{Ad}_{\rho_{|_{\mathbb{S}^1}}}})$ as $\mathbf{h}^{j}_{\mathbb{S}^1}$ for $j=0,1.$ For a given basis $\mathbf{h}^1_{\Sigma_{g,2}}$ of $H^1(\Sigma_{g,2};\mathrm{Ad}_\rho),$ there exist bases $\mathbf{h}^1_{\Sigma_{2,2}^i}$ of $H^1(\Sigma_{2,2}^i;\mathrm{Ad}_{\rho_{|_{\Sigma_{2,2}^i}}})$ for each $i \in \{1,\ldots,k\}$ such that the following multiplicative gluing formula holds
\begin{eqnarray*}
 \mathbb{T}(\Sigma_{g,2};\rho,\{0,\mathbf{h}^{1}_{\Sigma_{g,2}},0\})&=& 
\prod_{i=1}^{k}
\mathbb{T}(\Sigma_{2,2}^i;{\rho_{|_{\Sigma_{2,2}^i}}},\{0,\mathbf{h}^{1}_{\Sigma_{2,2}^i},0\})\\
&& \times\; {\mathbb{T}(\mathbb{S}^1;\rho_{|_{\mathbb{S}^1}},
\{\mathbf{h}^{0}_{\mathbb{S}^1},\mathbf{h}^{1}_{\mathbb{S}^1}\})}^{-k+1}.
\end{eqnarray*}
\end{theorem}

If we use Theorem \ref{mainth2} and Theorem \ref{mainthm3} inductively, we obtain the following result.

\begin{theorem}\label{mainthm4}
Let $[\rho] \in \mathcal{R}^{g}(\pi_1(\Sigma_{g,1}),\partial(\Sigma_{g,1}),G)_{\rho_0}$ and let $\rho$ satisfy condition $C_2.$ For a given basis $\mathbf{h}^1_{\Sigma_{g,1}}$ and a fixed basis $\mathbf{h}^{j}_{\mathbb{S}^1}$ of $H^1(\Sigma_{g,1};\mathrm{Ad}_\rho)$ and $H^j(\mathbb{S}^1;{\mathrm{Ad}_{\rho_{|_{\mathbb{S}^1}}}}),$ $j=0,1,$ there exist respectively bases $\mathbf{h}^1_{\Sigma_{2,2}^i}$ and $\mathbf{h}^1_{\Sigma_{2,1}}$ of $H^1(\Sigma_{2,2}^i;\mathrm{Ad}_{\rho_{|_{\Sigma_{2,2}^i}}})$ and $H^1(\Sigma_{2,1};\mathrm{Ad}_{\rho_{|_{\Sigma_{2,1}}}})$ for each $i \in \{1,\ldots,k-1\}$ such that the corrective term becomes one and the 
following formula is valid
\begin{eqnarray*}
 \mathbb{T}(\Sigma_{g,1};\rho,\{0,\mathbf{h}^{1}_{\Sigma_{g,1}},0\})&=& 
\prod_{i=1}^{k-1}
\mathbb{T}(\Sigma_{2,2}^i;{\rho_{|_{\Sigma_{2,2}^i}}},\{0,\mathbf{h}^{1}_{\Sigma_{2,2}^i},0\})\\
&& \times\;
\mathbb{T}(\Sigma_{2,1};{\rho_{|_{\Sigma_{2,1}}}},\{0,\mathbf{h}^{1}_{\Sigma_{2,1}},0\})\\
&& \times\; {\mathbb{T}(\mathbb{S}^1;\rho_{|_{\mathbb{S}^1}},
\{\mathbf{h}^{0}_{\mathbb{S}^1},\mathbf{h}^{1}_{\mathbb{S}^1}\})}^{-k+1}.
\end{eqnarray*}
\end{theorem}

Now Theorem \ref{thm:mainthm} follows directly from equation (\ref{eq2mn1}) in Theorem \ref{mainth1} and Theorem \ref{mainthm4}.

Let us consider the decomposition 
$\Sigma_{g+2,n}=\Sigma_{g,0}{\#}\Sigma_{2,n}$ for any $n\geq 1$ and assume that $\rho:\pi_1(\Sigma_{g+2,n})\rightarrow G$ is a good and $\partial$-regular representation such that the restrictions $\rho_{|_{\pi_1(\Sigma_{g,1})}}$ and 
$\rho_{|_{\pi_1(\Sigma_{2,n+1})}}$ are good and $\partial$-regular as well. By Remark \ref{rem:bndry1}, the restriction $\rho_{|_{\mathbb{S}^1}}$ is regular. Following the same arguments stated in the proof of Theorem \ref{mainth1}, we have:
\begin{theorem}\label{mainthm5}
Let $[\rho] \in \mathcal{R}^{g}(\pi_1(\Sigma_{g+2,n}),\partial(\Sigma_{g+2,n}),G)_{\rho_0}$ and let $\rho$ satisfy the above assumption. For a given basis $\mathbf{h}^1_{\Sigma_{g+2,n}}$ of $H^1(\Sigma_{g+2,n};\mathrm{Ad}_\rho)$ and a fixed basis $\mathbf{h}^{j}_{\mathbb{S}^1}$ of $H^j(\mathbb{S}^1;{\mathrm{Ad}_{\rho_{|_{\mathbb{S}^1}}}}),$ $j=0,1,$ there exist respectively bases $\mathbf{h}^1_{\Sigma_{g,1}}$ and 
$\mathbf{h}^1_{\Sigma_{2,n+1}}$ of $H^1(\Sigma_{g,1};\mathrm{Ad}_{\rho_{|_{\Sigma_{g,1}}}})$ and $H^1(\Sigma_{2,n+1};\mathrm{Ad}_{\rho_{|_{\Sigma_{2,n+1}}}})$ such that the
following gluing formula holds
\begin{eqnarray*}
 \mathbb{T}(\Sigma_{g+2,n};\rho,\{0,\mathbf{h}^{1}_{\Sigma_{g+2,n}},0\})&=& 
\mathbb{T}(\Sigma_{g,1};{\rho_{|_{\Sigma_{g,1}}}},\{0,\mathbf{h}^{1}_{\Sigma_{g,1}},0\})\\
&& \times\;
\mathbb{T}(\Sigma_{2,n+1};{\rho_{|_{\Sigma_{2,n+1}}}},\{0,\mathbf{h}^{1}_{\Sigma_{2,n+1}},0\})\\
&& \times\; {\mathbb{T}(\mathbb{S}^1;\rho_{|_{\mathbb{S}^1}},
\{\mathbf{h}^{0}_{\mathbb{S}^1},\mathbf{h}^{1}_{\mathbb{S}^1}\})}^{-1}.
\end{eqnarray*}
Moreover, by Theorem \ref{mainthm4}, there exist respectively bases $\mathbf{h}^1_{\Sigma_{2,2}^i}$ and $\mathbf{h}^1_{\Sigma_{2,1}}$ of $H^1(\Sigma_{2,2}^i;\mathrm{Ad}_{\rho_{|_{\Sigma_{2,2}^i}}})$ and $H^1(\Sigma_{2,1};\mathrm{Ad}_{\rho_{|_{\Sigma_{2,1}}}})$ for each $i \in \{1,\ldots,k-1\}$ such that the
following formula is valid
\begin{eqnarray*}
  \mathbb{T}(\Sigma_{g+2,n};\rho,\{0,\mathbf{h}^{1}_{\Sigma_{g+2,n}},0\})&=& 
\prod_{i=1}^{k-1}
\mathbb{T}(\Sigma_{2,2}^i;{\rho_{|_{\Sigma_{2,2}^i}}},\{0,\mathbf{h}^{1}_{\Sigma_{2,2}^i},0\})\\
&& \times\;
\mathbb{T}(\Sigma_{2,1};{\rho_{|_{\Sigma_{2,1}}}},\{0,\mathbf{h}^{1}_{\Sigma_{2,1}},0\})\\
&& \times\;
\mathbb{T}(\Sigma_{2,n+1};{\rho_{|_{\Sigma_{2,n+1}}}},\{0,\mathbf{h}^{1}_{\Sigma_{2,n+1}},0\})\\
&& \times\; {\mathbb{T}(\mathbb{S}^1;\rho_{|_{\mathbb{S}^1}},
\{\mathbf{h}^{0}_{\mathbb{S}^1},\mathbf{h}^{1}_{\mathbb{S}^1}\})}^{-k}.
\end{eqnarray*}
\end{theorem}

\subsection{Further Consequences}
\label{sec:9}

The following is the proof of Corollary \ref{cor:closed}.
\begin{proof}
From Definition \ref{defcrl1} it follows that
\begin{equation}\label{fn0}
{\mathbb{T}(\mathbb{S}^1;\rho_{|_{\mathbb{S}^1}},
\{\mathbf{h}^{0}_{\mathbb{S}^1},\mathbf{h}^{1}_{\mathbb{S}^1}\})}=\frac{\nu(\wedge \mathbf{h}^{1}_{\mathbb{S}^1})^2}{<\wedge\mathbf{h}^{1}_{\mathbb{S}^1},\wedge\mathbf{h}^0_{\mathbb{S}^1}>^2}.
\end{equation}
Since $G =SL_2(\mathbb{C})$ is simply connected, it has rank $r=2-1=1$ and dimension $d=2^2-1=3.$ Thus, 
for $g=2^t\geq 4$ and $k=g/2$ we have
$$\mathrm{dim}(H^1(\Sigma_{g,0};\mathrm{Ad}_\rho))=-\chi(\Sigma_{g,0})\;d=6g-6=12k-6.$$

By Theorem \ref{wittheo}, the following equation holds
\begin{equation}\label{fn1}
\left|\mathbb{T}(\Sigma_{g,0},\{0,\mathbf{h}^1_{\Sigma_{g,0}},0\})\right|= \frac{\left|\omega_{_{\Sigma_{g,0}}}^{6k-3}(\wedge \mathbf{h}^1_{\Sigma_{g,0}})\right|} {(6k-3)!}.
\end{equation}

By Theorem \ref{prtitheo}, we obtain the following formulas
\begin{equation}\label{fn2}
\left|\mathbb{T}(\Sigma_{2,2},\{0,\mathbf{h}^1_{\Sigma_{2,2}},0\})\right|= \frac{\left|\omega_{\Sigma_{2,2}}^5(\wedge \mathbf{h}^1_{\Sigma_{2,2}})\wedge \nu_1^*  \wedge \nu_2^* \right|} {5!}.
\end{equation}

\begin{equation}\label{fn3}
\left|\mathbb{T}(\Sigma_{2,1},\{0,\mathbf{h}^1_{\Sigma_{2,1}},0\})\right|= \frac{\left|\omega_{\Sigma_{2,1}}^4(\wedge \mathbf{h}^1_{\Sigma_{2,1}})\wedge \nu_1^* \right|} {4!}.
\end{equation}

Combining Theorem \ref{thm:mainthm} and equations (\ref{fn0})-(\ref{fn3}) finish the proof of Corollary~\ref{cor:closed}. Namely, we have
\begin{eqnarray*}
 \left|\omega_{\Sigma_{g,0}}^{6k-3}(\wedge \mathbf{h}^1_{\Sigma_{g,0}})\right|&=& 
\mathcal{M}_k\; \prod_{i=1}^{k-2}
\left|\omega_{\Sigma_{2,2}^i}^5(\wedge \mathbf{h}^1_{\Sigma_{2,2}^i})\wedge \nu_1^*  \wedge \nu_2^* \right| \; \prod_{\mu=1}^{2}
\left|\omega_{\Sigma_{2,1}^{\mu}}^4(\wedge \mathbf{h}^1_{\Sigma_{2,1}^{\mu}})\wedge \nu_1^* \right|.
\end{eqnarray*}
Here, $\mathcal{M}_k=(6k-3)!\; (5!)^{2-k}\; (4!)^{-2}  \; {\frac{\nu(\wedge \mathbf{h}^{1}_{\mathbb{S}^1})^{-2k+2}}{<\wedge\mathbf{h}^{1}_{\mathbb{S}^1},\wedge\mathbf{h}^0_{\mathbb{S}^1}>^{-2k+2}}}\in \mathbb{C}.$
\end{proof}

The following corollaries are obtained similarly.
\begin{corollary}
Let  $[\rho] \in \mathcal{R}^{g}(\pi_1(\Sigma_{g,2}),\partial(\Sigma_{g,2}),G)_{\rho_0}$ and let $\rho$ satisfy condition $C_2.$ Fix the basis of $H^j(\mathbb{S}^1;{\mathrm{Ad}_{\rho_{|_{\mathbb{S}^1}}}})$ as $\mathbf{h}^{j}_{\mathbb{S}^1}$ for $j=0,1.$ For a given basis $\mathbf{h}^1_{\Sigma_{g,2}}$ of $H^1(\Sigma_{g,2};\mathrm{Ad}_\rho),$ there exists a basis $\mathbf{h}^1_{\Sigma_{2,2}^i}$ of $H^1(\Sigma_{2,2}^i;\mathrm{Ad}_{\rho_{|_{\Sigma_{2,2}^i}}})$ for each $i \in \{1,\ldots,k\}$ such that the following formula holds
\begin{eqnarray*}
\left|\omega_{\Sigma_{g,2}}^{6k-1}(\wedge \mathbf{h}^1_{\Sigma_{g,2}})\right|&=& 
\mathcal{M}_k\; \prod_{i=1}^{k}
\left|\omega_{\Sigma_{2,2}^i}^5(\wedge \mathbf{h}^1_{\Sigma_{2,2}^i})\wedge 
\nu_1^* \wedge \nu_2^* \right|.
\end{eqnarray*}
Here, $\mathcal{M}_k=(6k-1)!\; (5!)^{-k} \; {\frac{\nu(\wedge \mathbf{h}^{1}_{\mathbb{S}^1})^{-2k+2}}{<\wedge\mathbf{h}^{1}_{\mathbb{S}^1},\wedge\mathbf{h}^0_{\mathbb{S}^1}>^{-2k+2}}}\in \mathbb{C}.$
\end{corollary}

\begin{corollary}
Let $[\rho] \in \mathcal{R}^{g}(\pi_1(\Sigma_{g,1}),\partial(\Sigma_{g,1}),G)_{\rho_0}$ and let $\rho$ satisfy condition $C_2.$ For a given basis $\mathbf{h}^1_{\Sigma_{g,1}}$ of $H^1(\Sigma_{g,1};\mathrm{Ad}_\rho)$ and a fixed basis $\mathbf{h}^{j}_{\mathbb{S}^1}$ of $H^j(\mathbb{S}^1;{\mathrm{Ad}_{\rho_{|_{\mathbb{S}^1}}}}),$ $j=0,1,$ there are bases $\mathbf{h}^1_{\Sigma_{2,2}^i}$ and $\mathbf{h}^1_{\Sigma_{2,1}}$ of $H^1(\Sigma_{2,2}^i;\mathrm{Ad}_{\rho_{|_{\Sigma_{2,2}^i}}})$ and $H^1(\Sigma_{2,1};\mathrm{Ad}_{\rho_{|_{\Sigma_{2,1}}}}),$ respectively  for each $i \in \{1,\ldots,k-1\}$ such that the 
following formula is valid for the volume forms
\begin{eqnarray*}
 \left|\omega_{\Sigma_{g,1}}^{6k-2}(\wedge \mathbf{h}^1_{\Sigma_{g,1}})\right|
&=& \mathcal{M}_k\; \prod_{i=1}^{k-1}
\left|\omega_{\Sigma_{2,2}^i}^5(\wedge \mathbf{h}^1_{\Sigma_{2,2}^i})\wedge \nu_1^*  \wedge \nu_2^* \right| \;
\left|\omega_{\Sigma_{2,1}}^4(\wedge \mathbf{h}^1_{\Sigma_{2,1}})\wedge \nu_1^* \right|.
\end{eqnarray*}
Here, $\mathcal{M}_k=(6k-2)!\; (5!)^{1-k} \; (4!)^{-1} \; {\frac{\nu(\wedge \mathbf{h}^{1}_{\mathbb{S}^1})^{-2k+2}}{<\wedge\mathbf{h}^{1}_{\mathbb{S}^1},\wedge\mathbf{h}^0_{\mathbb{S}^1}>^{-2k+2}}}\in \mathbb{C}.$
\end{corollary}

\begin{corollary}
Let  $[\rho] \in \mathcal{R}^{g}(\pi_1(\Sigma_{g+2,n}),\partial(\Sigma_{g+2,n}),G)_{\rho_0}$ and let $\rho$ satisfy the assumption in Theorem~\ref{mainthm5}. For a given basis $\mathbf{h}^1_{\Sigma_{g+2,n}}$ of $H^1(\Sigma_{g+2,n};\mathrm{Ad}_\rho)$ and a fixed basis $\mathbf{h}^{j}_{\mathbb{S}^1}$ of $H^j(\mathbb{S}^1;{\mathrm{Ad}_{\rho_{|_{\mathbb{S}^1}}}}),$ $j=0,1,$ there exist bases $\mathbf{h}^1_{\Sigma_{2,2}^i},$ $\mathbf{h}^1_{\Sigma_{2,1}}$ and $\mathbf{h}^1_{\Sigma_{2,n+1}}$ of $H^1(\Sigma_{2,2}^i;\mathrm{Ad}_{\rho_{|_{\Sigma_{2,2}^i}}}),$ $H^1(\Sigma_{2,1};\mathrm{Ad}_{\rho_{|_{\Sigma_{2,1}}}})$ and $H^1(\Sigma_{2,n+1};\mathrm{Ad}_{\rho_{|_{\Sigma_{2,n+1}}}}),$ respectively for each $i \in \{1,\ldots,k-1\}$ such that the volume forms satisfy the following formula 
\begin{eqnarray*}
 \left|\omega_{\Sigma_{g+2,n}}^{6k+n+3}(\wedge \mathbf{h}^1_{\Sigma_{g+2,n}})\right|
&=& \mathcal{M}_k\; \prod_{i=1}^{k-1}
\left|\omega_{\Sigma_{2,2}^i}^5(\wedge \mathbf{h}^1_{\Sigma_{2,2}^i})\wedge \nu_1^*  \wedge \nu_2^* \right| 
\left|\omega_{\Sigma_{2,1}}^4(\wedge \mathbf{h}^1_{\Sigma_{2,1}})\wedge \nu_1^* \right|\\
& &\times \left|\omega_{\Sigma_{2,n+1}}^{4+n}(\wedge \mathbf{h}^1_{\Sigma_{2,n+1}})\wedge \nu_1^*\wedge\cdots \wedge \nu_{n+1}^* \right| .
\end{eqnarray*}
Here, $\mathcal{M}_k=(6k+n+3)!\; (5!)^{1-k}\; (4!)^{-1}  \; ((4+n)!)^{-1}\; {\frac{\nu(\wedge \mathbf{h}^{1}_{\mathbb{S}^1})^{-2k}}{<\wedge\mathbf{h}^{1}_{\mathbb{S}^1},\wedge\mathbf{h}^0_{\mathbb{S}^1}>^{-2k}}}\in \mathbb{C}.$
\end{corollary}



\end{document}